\theoremstyle{plain}
 \newtheoremstyle{miestilo}{12pt}{\topsep}{\itshape}{}{\bf}{}{ }{}
 \theoremstyle{miestilo}
\newtheorem{theorem}[subsection]{Theorem.}
\newtheorem{proposition}[subsection]{Proposition.}
\newtheorem{lemma}[subsection]{Lemma.}
\newtheorem{corollary}[subsection]{Corollary.}
 \newtheoremstyle{misnotas}{12pt}{12pt}{}{}{\bf}{}{ }{\remark}
 \theoremstyle{misnotas}
 \newtheorem{remark}[subsection]{\ {\bf Remark.}}
\begin{document}
\flushbottom

 {\LARGE
\centerline{\bf On actions on cubic stochastic matrices
  }}

 \medskip

 \centerline{ Irene Paniello  \footnote{Partially supported  by the
Spanish Ministerio de Ciencia y Tecnolog\'{\i}a and FEDER (MTM2013-45588-C3-2-P)} }

 \medskip
 \centerline{{\sl Department of Statistics and Operative Research
}}
\medskip \centerline{{\sl Public University of Navarre}}
\medskip
 \centerline{{\sl 31006 Pamplona, Spain}}
\medskip
\centerline{E-mail: irene.paniello@unavarra.es}
\medskip

\noindent {\bf Keywords:} Stochastic matrix, Stochastic group, Group action, Marginal distribution, Bivariate Markov chain.

 \noindent {\bf  2010 Mathematics Subject Classification:  } 15B51,   20M30, 60J10.

\begin{abstract}
We consider the set of ($n\times n\times n$) cubic stochastic matrices of type (1,2) together with different multiplication rules that not only retain their stochastic
properties but also endow this set with an associative semigroup structure.  Then we introduce different actions of the semigroup of nonnegative column stochastic $n\times n$ matrices on the set of cubic stochastic matrices of type (1,2) and study how  these actions translate to the
cubic matrix slices and marginal distributions. Actions introduced here provide an algebraic framework where considering different changes affecting   the transition probabilities ruling certain biological populations.
\end{abstract}

 \section{Introduction.}

\paragraph{}  Many discrete (physical, biological,...) dynamical systems can be modeled using stochastic processes. Then   interactions occurring
within the system between its particles (also the system evolution between its states) can be assumed to be ruled by a set of transition probabilities, oftentimes arranged into a square $n\times n $ matrix.
However sometimes, depending on the complexity of the system interactions (mainly when interactions involving more than two particles may occur), multi-index matrices, also called tensors, need to be introduced. Independently of their dimension, all these matrices gathering transition probabilities  will be nonnegative and endowed with some stochastic property.

\paragraph{} Introducing multi-index stochastic matrices turns out to be necessary, for instance, when algebraically studying the backward inheritance (from progeny to ancestors) in Mendelian genetic populations, as introduced by Tian and Li \cite{tian_coalgebras}. These authors considered the problem of mathematically formalizing the dynamics of the transference of the genetic information through generations in a biological population ruled by Mendel's laws, focusing on the direction from progeny (offspring) to ancestors. This question was later revisited in \cite{paniello-LAA},   taking advantage of Maksimov's work \cite{m_cubic} and rewritten  in terms of ($n\times n\times n$) cubic matrices.  Assuming that the probabilities ruling the genetic inheritance remain constant through generations and   only a finite number of different genetic types  exists, we can identify such a population's behavior with that of a time homogeneous discrete dynamical system.

 \paragraph{}  In the late 40's Geiringer  \cite{geiringer} remarked that the treatment of the problems arising in biological structures should be comparable  to that
 of any other dynamical system,  whenever the underlying stochastic processes behave similarly.
This paper is aimed at providing an algebraic treatment of those stochastic tensors (multidimensional arrays) appearing in the study of backwards genetic inheritance in Mendelian genetic populations \cite{paniello-LAA,tian_coalgebras}. More precisely, we focus on studying cubic stochastic matrices and their connection to square stochastic matrices.

 \paragraph{}
Cubic stochastic matrices of type (1,2) were first considered by Maksimov in \cite{m_cubic} in an attempt to explain processes in kinetic theory  not representable in terms of Markov chains.
  Maksimov mainly focused on time homogeneous systems, but also considered
  Markov interaction processes (M.i.p.)  \cite[page 62]{m_cubic} involving  nonnegative   stochastic square ($n\times n$) matrices together to cubic   ($n\times n\times n$) stochastic matrices of type (2,3) acting simultaneously.

 \paragraph{}
A similar connection to that occurring in M.i.p.    between  square and cubic matrices
was previously considered
by Lyubich \cite[page 57]{lyubich} in terms of  matrices of mutation rates acting on genetic systems.    Lyubich
studied the  convergence of the so-called evolutionary operators, defined as the composition between a selection and a mutation operator.

 \paragraph{}
The  algebraic approach to   products between stochastic  tensors of different dimensions we attempt here
 is also motivated by Braman \cite{braman}, who  examined actions of (non-necessarily stochastic) $n\times n\times n$ cubic matrices on $n\times n$ square matrices.
 In the current work, however,
 it becomes necessary to take
  into account the stochastic nature of the  matrices  involved, a fact that clearly delimits  which multiplications can be defined since we just look for those  retaining   stochastic properties.

 \paragraph{}
After this introductory section,  we  first review different notions of  square and cubic matrices, all having  stochastic significance, with the aim of setting some notation. In
  the third section, we recall different types of cubic stochastic matrices, as originally introduced by Maksimov in \cite{m_cubic}, to focus on the so-called cubic stochastic matrices of type (1,2).
These cubic stochastic matrices    can gather up more complex transition probabilities than the usual square stochastic matrices,
but also require of multiplication rules able to retain their stochastic properties. Therefore we generalize Maksimov's defined  multiplications for stochastic matrices in an attempt to obtain new tools for studying cubic matrices.  The new multiplication rules come from skewing, or more precisely weighting, the original Maksimov's definition.

 \paragraph{}
In the fourth section we define different actions of the semigroup $ NS(n,\mathbb{R})$, of nonnegative column stochastic $n\times n$ matrices
on the set  ${\rm CS_{(1,2)}(n,\mathbb{R})}   $  of cubic stochastic matrices of type (1,2), endowed with one of the previously defined weighted multiplications.  We review how these actions translate to different lower-dimensional matrices related to cubic stochastic matrices of type (1,2), such as their slices and marginal distributions, as well as how they act on the bivariate Markov chains defined by the marginal distributions.

 \paragraph{}
Finally, in the last section, besides  ending up with a few remarks on some extensions that can be derived from  these types of actions,    we  briefly describe the problem posed by Tian and  Li in \cite{m_cubic} from which our interest in actions defined on cubic matrices stems.

\section{Square stochastic matrices and Markov chains.}

This section is focused on the properties of the set of (nonnegative) column stochastic matrices, stressing on their connection to Markov processes. Examples over different fields are given, before restricting our   attention to matrices with real entries, aimed by their role as transition probabilities ruling stochastic processes.

\subsection{Square stochastic matrices.} \label{square_stochastic}  A  (square) {\sl stochastic matrix} is a $n\times n$ matrix
$\textbf{P}=(p_{ij})_{i,j=1}^n$ such that each column sum equals to one
$$\sum_{i=1}^np_{ij}=1,\quad \hbox{{\rm for all}\  $i,j=1,\ldots,n$}. $$

  This definition corresponds to the so-called column stochastic matrices.  Row stochastic matrices can be defined similarly.
   Vectors and matrices will be written in boldface capital  letters ($ \textbf{A}, \textbf{v}$,...)  and their usual multiplication simply by juxtaposition.

 \begin{lemma}
 The following
 results hold for stochastic matrices over any arbitrary field $F$:
 \begin{enumerate}
  \item[(i)] The product of two stochastic matrices is stochastic.
   \item[(ii)]  The set of stochastic matrices forms a multiplicative semigroup.
    \item[(iii)] The set of nonsingular stochastic matrices forms a (multiplicative) group, called the stochastic group and denoted by $S(n,F)$.
  \end{enumerate}
  \end{lemma}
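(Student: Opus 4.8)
The plan is to reduce all three statements to a single linear-algebraic characterization of column stochasticity. Writing $\textbf{1}$ for the all-ones row vector $(1,\ldots,1)$ over $F$, the requirement that every column sum of $\textbf{P}=(p_{ij})$ equal $1$ is precisely the matrix identity $\textbf{1}\,\textbf{P}=\textbf{1}$. This reformulation holds over an arbitrary field, since it involves only the field identity $1$ and formal matrix multiplication, and it turns each part of the lemma into a one-line computation.

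For part (i), given two stochastic matrices $\textbf{P}$ and $\textbf{Q}$, I would compute $\textbf{1}(\textbf{P}\textbf{Q})=(\textbf{1}\,\textbf{P})\textbf{Q}=\textbf{1}\,\textbf{Q}=\textbf{1}$, using associativity of matrix multiplication together with the characterization applied first to $\textbf{P}$ and then to $\textbf{Q}$; hence $\textbf{P}\textbf{Q}$ is again stochastic. Equivalently, one may expand $\sum_i(\textbf{P}\textbf{Q})_{ij}=\sum_k q_{kj}\sum_i p_{ik}=\sum_k q_{kj}=1$, interchanging the order of summation and using column stochasticity of $\textbf{P}$ and then of $\textbf{Q}$.

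For part (ii), closure is exactly part (i) and associativity is inherited from ordinary matrix multiplication; since the identity matrix satisfies $\textbf{1}\,\textbf{I}=\textbf{1}$, it is stochastic, so the set is in fact a monoid and in particular a semigroup.

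Part (iii) is where the only genuine step lies, namely checking that the inverse of a nonsingular stochastic matrix remains stochastic. Given a nonsingular stochastic $\textbf{P}$, right-multiplying $\textbf{1}\,\textbf{P}=\textbf{1}$ by $\textbf{P}^{-1}$ yields $\textbf{1}=\textbf{1}\,\textbf{P}^{-1}$, so $\textbf{P}^{-1}$ is stochastic, and it is nonsingular as the inverse of a nonsingular matrix. Combined with closure under products (a product of nonsingular matrices is nonsingular, and is stochastic by (i)) and the presence of the nonsingular stochastic identity $\textbf{I}$, this establishes all the group axioms. I expect the inverse-closure computation to be the conceptual crux, though the all-ones characterization renders it immediate; no further obstacle arises from working over an arbitrary field rather than the reals.
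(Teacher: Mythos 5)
Your proof is correct, and it is essentially the standard argument that the paper leaves implicit (its own proof just says the claims are straightforward from viewing stochastic matrices inside $M(n,F)$). Your $\textbf{1}\,\textbf{P}=\textbf{1}$ characterization cleanly supplies the details, including the one nontrivial point that $\textbf{P}^{-1}$ is again stochastic, and it matches the linear-form-preservation viewpoint the paper itself invokes later when discussing the Markov Lie group.
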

\begin{proof} (i) is straightforward and (ii) and (iii) follow  from considering the set of stochastic matrices as a subset of  $ M(n,F)$,  the set of all $n\times n$ matrices with entries in $F$.
  \end{proof}

\subsection{Example.} The set  $S(2,\mathbb{R}) $  of $2\times 2$ nonsingular  real  stochastic   matrices, i.e.,
      $$S(2,\mathbb{R}) =   \Big\{ \left(                  \begin{array}{cc}
                    1-a & b \\
                    a &  1-b\\
                  \end{array}
                \right)\mid \  a,b\in \mathbb{R},\    a+b\neq1\Big\},    $$
   is a group, whose Lie geometry is described in \cite{letter}.

\subsection{Examples.}  In \cite{poole} examples of stochastic groups are given for different fields $F$:
  \begin{enumerate}
 \item[(i)]  Given an arbitrary field $F$,   $S(n,F) \cong Aff(n-1,F)$ for all $n\geq 2$, where $Aff(n-1,F)$ denotes the affine group, i.e. the set of all mappings $f:F^{n-1}\to F^{n-1}$ such that $f(\textbf{x}^T)=\textbf{A}\textbf{x}^T+\textbf{b}^T$, with $\textbf{A}\in GL(n-1,F)$, the general linear  group,  and  $\textbf{x}, \textbf{b}\in F^{n-1}$. Then $S(n,F)$     is a group with the   composition.
   \item[(ii)] Given a  Galois field $F$ of order $q=p^m$, $p$ prime,  $S(n,F)$  includes examples such as   $S(2,2)\cong \mathbb{Z}_2$  or $S(2,4)=A_4 $ among many others. Here, following \cite{poole}, we denote  $S(n,F)$ by  $S(n,q)$.
   \end{enumerate}

\begin{remark}
The Markov Lie group was defined by Johnson in \cite{johnson}  to be the subset of the general linear group $GL(n,\mathbb{R})$ preserving the linear form $\sum_{i=1}^n x_i$. As noted in \cite[p. 254]{johnson}  the Markov Lie group is isomorphic to the affine group $ Aff(n-1,\mathbb{R})$.
\end{remark}

\paragraph{ }
More general notions of stochastic matrices can be defined and also endowed with an algebraic structure.

\subsection{Example.}
Let $F$ be $\mathbb{R}$ or $\mathbb{C}$.  An $r$-{\sl generalized doubly stochastic matrix} is a   matrix having all its row and column sums equal to $r\in F$.  The set of all $n\times n$ $r$-generalized stochastic matrices is denoted by $\Omega^r(n,F)$ being $\Omega(n,F)=\cup_r \Omega^r(n,F)$ the set of all generalized doubly stochastic matrices.
  Nonnegative matrices  in  $\Omega^1(n,F)$, i.e. with $r=1$,   are called {\sl doubly stochastic matrices}.
 Moreover  (see \cite{mourad}):
\begin{enumerate}
\item[(i)]
$\Omega^r(n,F)$ is a subalgebra of $M(n, \mathbb{C})$ iff $r=0$.
\item[(ii)]   $GL_1(n,F)=GL(n,F)\cap \Omega^1(n,F)$, where $GL(n,F)$ denotes the general linear group over  $F$,   is a Lie group with Lie algebra $\Omega^0(n,F)$, simple of dimension $(n-1)^2$ over $F$.
\end{enumerate}
Doubly stochastic matrices were  already considered by Sagle  in \cite{sagle} to prove  that the set of doubly stochastic matrices over a field $F$ of characteristic zero is a semisimple associative algebra generated by permutation matrices. Moreover it is isomorphic to $F\oplus M(n-1,F)$.

\paragraph{ }
Interesting subsets of stochastic matrices appear when matrix entries are considered in ordered fields.

\subsection{Example.} Let  $NS(2,\mathbb{R}) $  be the set of   $2\times 2$  (real)  stochastic   matrices with nonnegative entries, i.e.,
      $$NS(2,\mathbb{R}) =   \Big\{ \left(                  \begin{array}{cc}
                    1-a & b \\
                    a &  1-b\\
                  \end{array}
                \right)\mid \  a,b\in \mathbb{R},\   0\leq a,b\leq1\Big\}.    $$
  Matrices in $NS(2,\mathbb{R}) $  are also called stochastic Markov matrices. These matrices provide transition probability matrices in models appearing when studying phylogenetic theory   \cite{letter}.
    Sumner, Fern\'{a}ndez-S\'{a}nchez and Jarvis considered  more general  Markov models in \cite{sumner et al}. We recall here that   Markov models are well-defined subsets  of matrices of  $M(n,\mathbb{C})$  with all column sums equal to one.

\paragraph{ } Since this paper aimed to study those stochastic matrices arising when dealing with transition probabilities, from now on we will restrict to stochastic matrices with nonnegative (real) entries.
 Different results on the structure of the set of nonnegative matrices can be found in \cite{brown,flor,plemmons}.

\begin{remark} We will denote by $NS(n,\mathbb{R}) $ the set of all nonnegative real column stochastic $n\times n $ matrices. Notice that we do not require matrices in $NS(n,\mathbb{R}) $ to be nonsingular.\end{remark}

 \paragraph{ } The following results are straightforward.

\begin{lemma}
  Any convex linear combination of  nonnegative  stochastic matrices is stochastic, i.e., given
  $\textbf{A}, \textbf{B}\in NS(n,\mathbb{R}) $,then $\lambda \textbf{A}+(1-\lambda)\textbf{B}\in NS(n,\mathbb{R}) $ for any $\lambda\in \mathbb{R}$ such that $0\leq \lambda\leq 1$.
  \end{lemma}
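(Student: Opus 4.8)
The plan is to verify directly that the matrix $\lambda\textbf{A}+(1-\lambda)\textbf{B}$ satisfies the two defining conditions of membership in $NS(n,\mathbb{R})$, namely nonnegativity of all entries and the requirement that each column sum equals one. Writing $\textbf{A}=(a_{ij})$ and $\textbf{B}=(b_{ij})$, the $(i,j)$ entry of the convex combination is $\lambda a_{ij}+(1-\lambda)b_{ij}$, so the whole argument reduces to checking these two scalar conditions entrywise and columnwise.

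First I would address nonnegativity. Since $\textbf{A},\textbf{B}\in NS(n,\mathbb{R})$, every entry $a_{ij}$ and $b_{ij}$ is nonnegative, and the hypothesis $0\leq\lambda\leq 1$ guarantees that both coefficients $\lambda$ and $1-\lambda$ are nonnegative. Hence each entry $\lambda a_{ij}+(1-\lambda)b_{ij}$ is a nonnegative combination of nonnegative reals and is therefore nonnegative. This takes care of the first defining property.

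Next I would check the column-sum condition. Fixing a column index $j$ and summing over the row index $i$, linearity of the sum gives
\[
\sum_{i=1}^n\bigl(\lambda a_{ij}+(1-\lambda)b_{ij}\bigr)=\lambda\sum_{i=1}^n a_{ij}+(1-\lambda)\sum_{i=1}^n b_{ij}.
\]
Because $\textbf{A}$ and $\textbf{B}$ are column stochastic, each of the two inner sums equals one, so the right-hand side becomes $\lambda\cdot 1+(1-\lambda)\cdot 1=1$. Thus every column of the convex combination sums to one, which is exactly the stochasticity requirement, and combined with the nonnegativity established above this shows $\lambda\textbf{A}+(1-\lambda)\textbf{B}\in NS(n,\mathbb{R})$.

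There is no real obstacle here: the statement is a routine consequence of the convexity of the defining constraints, and the only point requiring any care is to use the bound $0\leq\lambda\leq 1$ in the nonnegativity step (linearity of the column sums alone would work for any real $\lambda$, but nonnegativity genuinely needs both $\lambda\geq 0$ and $1-\lambda\geq 0$). Accordingly I would present the proof in two short lines, one per defining property, rather than elaborate further.
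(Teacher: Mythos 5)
Your proof is correct and is exactly the routine entrywise verification the paper has in mind; the paper itself omits the argument, labelling the lemma as straightforward. Nothing further is needed.
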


\begin{proposition}
    $ NS(n,\mathbb{R}) $ is a multiplicative semigroup.
  \end{proposition}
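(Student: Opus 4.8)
The plan is to realize $NS(n,\mathbb{R})$ as a subsemigroup of the multiplicative semigroup of all (real) column stochastic matrices, whose semigroup structure is already guaranteed by the first Lemma of this section. Concretely, two things must be checked: that $NS(n,\mathbb{R})$ is closed under matrix multiplication, and that this multiplication is associative on it. Associativity comes for free, since it is inherited from the ambient algebra $M(n,\mathbb{R})$ of all $n\times n$ real matrices, where matrix multiplication is associative; so the entire burden lies in closure.

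For closure I would take $\textbf{A}=(a_{ij})$ and $\textbf{B}=(b_{ij})$ in $NS(n,\mathbb{R})$ and examine the entries and column sums of their product $\textbf{A}\textbf{B}$, whose $(i,k)$ entry is $\sum_{j} a_{ij}b_{jk}$. Two properties must survive. The first is column stochasticity; this already follows from part (i) of the first Lemma, but one can also see it directly by computing the $k$-th column sum
$$\sum_i \sum_j a_{ij} b_{jk} = \sum_j \Big(\sum_i a_{ij}\Big) b_{jk} = \sum_j b_{jk} = 1,$$
where the inner sum equals one because $\textbf{A}$ is column stochastic and the final sum equals one because $\textbf{B}$ is. The second property, and this is the genuinely new requirement beyond the arbitrary-field case treated earlier, is that nonnegativity must be preserved.

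The only point requiring the ordered structure of $\mathbb{R}$ is precisely this preservation of nonnegativity, and I expect it to be immediate rather than an obstacle: each entry $\sum_j a_{ij}b_{jk}$ of $\textbf{A}\textbf{B}$ is a sum of products of nonnegative reals, hence nonnegative, because no subtraction ever enters the computation of a matrix product. Thus the nonnegativity cone is closed under multiplication. The content of the proposition is therefore simply that the two defining conditions of $NS(n,\mathbb{R})$, nonnegativity and unit column sums, are both preserved by the ``sum of products'' shape of matrix multiplication, which makes $NS(n,\mathbb{R})$ a subsemigroup of the full stochastic semigroup. I would close by noting that we claim only a semigroup (not a group), consistent with the preceding Remark that matrices in $NS(n,\mathbb{R})$ are not required to be nonsingular.
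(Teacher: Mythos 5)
Your proof is correct and fills in exactly the ``straightforward'' argument the paper leaves implicit: closure follows from preservation of unit column sums together with preservation of nonnegativity, and associativity is inherited from $M(n,\mathbb{R})$. This matches the paper's intended reasoning, which also rests on Lemma~2.1(i)--(ii) specialized to nonnegative real matrices.
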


\paragraph{ }
As it is broadly known, matrices in   $ NS(n,\mathbb{R}) $ are related to Markov processes.

\subsection{Markov chains.}  A {\sl Markov chain} is a discrete time stochastic process  $X=\{X_t,t=0,1,\ldots\}$
 taking values
on a finite or countably infinite state space
 $\{1,2,\ldots\}$  satisfying the Markov property
$$P(X_{t+1}=i_{t+1}\mid X_0=i_0, X_1=i_1,\ldots,X_t=i_t)=P(X_{t+1}=i_{t+1}\mid X_t=i_t,),$$
where $i_k\in\{1,2,\ldots\}$ for $k=0,1,\ldots,t+1$.
Homogeneous Markov processes are those whose    transition
 probabilities $P(X_{t+1}=i_{t+1}\mid X_t=i_t) $ do  not depend on
 time $t$, i.e.
$P(X_{t+1}=i_{t+1}\mid X_t=i_t)=P(X_{t+1+k}=i_{t+1}\mid X_{t+k}=i_t) $ for all $k$.
Assuming the set of states to be finite $\{1,2,\ldots,n\}$ and denoting the transition probabilities by
 $p_{ij}=  P(X_{t+1}=i \mid X_t=j)$,    any homogeneous Markov chain gives rise to a   matrix  $\textbf{P}=(p_{ij})_{i,j=1}^n$ in  $ NS(n,\mathbb{R}) $, i.e., a column stochastic $n\times n$ matrix with nonnegative real entries.

\paragraph{ }
Despite of the wide range of dynamical systems whose evolution can be modeled using Markov chains, multivariate Markov models have also proved to be useful to study dynamical systems, mainly whenever it becomes necessary to simultaneously deal with multiple data sequences. This happens, for instance when modeling DNA sequences or when studying backwards genetic inheritance in Mendelian genetic systems \cite{ching,paniello-MD,tian_coalgebras}.

\subsection{Multivariate Markov models.}\label{MM  models}  Consider $s$ categorical data sequences, each one with $n$ possible states $\{1,2,\ldots,n\}$. Following
\cite{ching} we denote by
 $\textbf{X}^{(j)}_t$ the state probability distribution of the $j$-th sequence at time $t$. Then the
   multivariate Markov model is
$$ \textbf{X}_{t+1}^{(j)} =\sum_{k=1}^s \lambda_{jk} \textbf{P}^{(jk)} \textbf{X}_t^{(k)},\qquad j=1,\ldots,s, $$
  where $\lambda_{jk}\geq0$, $1\leq j,k\leq s$ and
$\sum_{k=1}^s \lambda_{jk}=1$  for $j=1,2,\ldots, s$. Matricially $\textbf{X}_{t+1}=\textbf{Q}\textbf{X}_t$, i.e.
$$\mathbf{X}_{t+1}=\left(%
\begin{array}{l}
 \mathbf{X}_{t+1}^{(1)}  \\
 \mathbf{X}_{t+1}^{(2)} \\
  \vdots  \\
   \mathbf{X}_{t+1}^{(s)}  \\
\end{array}%
\right)%
=\left(%
\begin{array}{cccc}
  \lambda_{11} \textbf{P}^{(11)} & \lambda_{12}\textbf{P}^{(12)}& \cdots & \lambda_{1s} \textbf{P}^{(1s)} \\
 \lambda_{21} \textbf{P}^{(21)} & \lambda_{22}\textbf{P}^{(22)} &\cdots &\lambda_{2s} \textbf{P}^{(2s)}\\
  \vdots & \vdots  &  \ddots &\vdots \\
  \lambda_{s1}  \textbf{P}^{(s1)} &\lambda_{s2}\textbf{P}^{(s2)} & \cdots & \lambda_{ss}\textbf{P}^{(ss)} \\
\end{array}
\right)%
\left(%
\begin{array}{l}
 \mathbf{X}_t^{(1)}  \\
 \mathbf{X}_t^{(2)} \\
  \vdots  \\
   \mathbf{X}_t^{(s)}  \\
\end{array}%
\right)$$
Notice $\textbf{P}^{(jk)}$ is the transition probability matrix from the states in the $k$-th sequence to those in the $j$-th sequence, and therefore, an element in  $ NS(n,\mathbb{R}) $. However column sums $\textbf{Q}$ need not be equal to one.

\paragraph{ } In the following sections we will consider the case $s=2$, i.e., bivariate Markov chains as appear in \cite{paniello-MD} in connection to certain $n\times n\times n$ matrices with stochastic properties.

\subsection{Quadratic stochastic operators.}\label{qso}
Prior to concluding this section it has to be pointed out that, regardless of their many applications, there are still many (biological, physical,...) systems that cannot be described by  Markov models. See for instance \cite{boltzam,dohtani, jenks,kawamura, kolda}. Another comprehensive reference is \cite{mukhamedov} where the stress is on the study of the dynamics of quadratic stochastic operators. As an example to illustrate one of these situations, let us consider quadratic stochastic operators
 acting on the $(n-1)$-dimensional simplex in $\mathbb{R}^n $,
$$S^{n-1}=\Big\{ \textbf{x}=(x_1,\ldots,x_n)\in \mathbb{R}^n\mid x_i\geq 0,\quad \sum_{i=1}^n x_i=1\Big\}$$ i.e.   transformations $V:S^{n-1}\to S^{n-1}$ given by $V(\textbf{x})_k=\sum_{i,j=1}^n p_{ijk}x_ix_j$, $k=1,\ldots,n$, where
$$ p_{ijk}\geq 0,\quad p_{ijk}=p_{jik}\quad \hbox{and}\quad \sum_{k=1}^n p_{ijk}=1,\  \forall\  i,j=1,\ldots,n.$$
Quadratic stochastic operators were introduced by Bernstein  \cite{bernstein} in connection to mathematical models in genetics. Notice that matricially arranging the
  $p_{ijk}$'s we obtain not a square $n\times n$ but a nonnegative cubic $n\times n\times n$ matrix $\textbf{P}=(p_{ijk})_{i,j,k=1}^n$ whose entries have the following biological interpretation: In a genetic population with types $\{1,2,\ldots,n\}$, $p_{ijk}$ denotes the probability of a type $k$ offspring to appear from parental types $i$ and $j$. Here due to the assumption $p_{ijk}=p_{jik}$, parents's type  order is not important.

\section{Cubic stochastic matrices. }

Multi-index  arrays, usually called tensors, make it possible to matricially represent  stochastic processes involving higher-order interaction systems. In this section we focus on third-order
 tensors, i.e.,  elements of $\mathbb{R}^{n_1 \times  n_2\times n_3}$ to consider those having
  $n_1=n_2=n_3$. Following Maksimov's work \cite{m_cubic}, we will name these tensors to be cubic matrices  (see  also \cite{ladra_flow,ladra_ACM})   and use boldface capital letters, e.g. $\textbf{P} $, to denote them, as we did for vectors and matrices (equivalently tensors of order one and two respectively).
Other different terms  (rectangular matrices, cubical third-order tensors,..)
 may also found  to refer to such matrices (see for instance \cite{kolda}).

\subsection{Cubic matrix.}
 A {\sl cubic matrix}
$\textbf{P}=(p_{ijk})_{i,j,k=1}^n$ is an object with three indices
$i,j,k$ which can be uniquely written in the form
$$\textbf{P}=(p_{ijk})_{i,j,k=1}^n=\sum_{i,j,k=1}^np_{ijk}(i,j,k),$$
where  $(i,j,k)$  denotes the  unit cubic matrices, i.e. $(i,j,k)$ is a $n\times n\times n$ cubic matrix whose $(i,j,k)$th entry is equal to 1 and all other entries are equal to 0.

\subsection{Cubic matrix decomposition.}\label{CM decomposition}
Cubic matrices, as any other higher-dimensional tensor, give rise to different subarrays by fixing   subsets of indices.
Following the notation used in \cite{kolda}, we define the (horizontal, lateral and frontal) slices of a cubic matrix
$\textbf{P}$ by fixing two indices and using the colon to indicate all elements (see  \cite[Figure 2.2]{kolda}):
\begin{enumerate}\item [(i)] The {\it horizontal slices} of $\mathbf{P}$  are $\mathbf{P}_{i::}=(p_{ijk})_{j,k=1}^n$, for all $i=1,\ldots,n$.
\item [(ii)] The  {\it lateral slices} of $\mathbf{P}$  are $\mathbf{P}_{:j:}=(p_{ijk})_{i,k=1}^n$, for all $j=1,\ldots,n$.
 \item [(iii)] The {\it frontal slices} of $\mathbf{P}$  are $\mathbf{P}_{::k}=(p_{ijk})_{i,j=1}^n$, for all $k=1,\ldots,n$.
\end{enumerate}
Fixing every index but one, we obtain the following fibers (see   \cite[Fi\-gure~2.1]{kolda}):
\begin{enumerate}\item [(i)] The $jk$-{\it columns} of $\mathbf{P}$  are $\mathbf{P}_{:jk}=(p_{1jk},\ldots, p_{njk})^T $, for all $j,k=1,\ldots,n$.
\item [(ii)] The $ik$-{\it rows} of $\mathbf{P}$  are $\mathbf{P}_{i:k}=(p_{i1k},\ldots, p_{ink})^T$, for all $i,k=1,\ldots,n$.
 \item [(iii)] The $ij$-{\it tubes} of $\mathbf{P}$  are $\mathbf{P}_{ij:}=(p_{ij1},\ldots, p_{ijn})^T$, for all $i,j=1,\ldots,n$.
\end{enumerate}

Slices (and also fibers) can be used to unfold cubic matrices reordering the matrix entries into a rectangular matrix. This process is called {\it matricization } or {\it unfolding}  \cite[2.4]{kolda}. To illustrate this procedure let $\textbf{P}$ be a $3\times 3\times 3$ cubic matrix and let

 $ \mathbf{P}_{::1}=
\left(
\begin{array}{ccc}
  p_{111} &   p_{121} &   p_{131} \\
  p_{211} &   p_{221} &   p_{231} \\
  p_{311} &   p_{321} &   p_{331}
\end{array}
\right) \qquad
$
$
\mathbf{P}_{::2}=
\left(
\begin{array}{ccc}
  p_{112} &   p_{122} &   p_{132} \\
  p_{212} &   p_{222} &   p_{232} \\
  p_{312} &   p_{322} &   p_{332}
\end{array}
\right)
$

\noindent and

$$ \mathbf{P}_{::3}=
\left(
\begin{array}{ccc}
  p_{113} &   p_{123} &   p_{133} \\
  p_{213} &   p_{223} &   p_{233} \\
  p_{313} &   p_{323} &   p_{333}
\end{array}
\right)
$$
be the three frontal slices of $\textbf{P}$. Then one of the possible matricizations of $\textbf{P}$ is the $9\times 3$ matrix given by its frontal slices:

$$
 \left(
   \begin{array}{c|c|c}
   \mathbf{P}_{::1} & \mathbf{P}_{::2} & \mathbf{P}_{::3} \\
   \end{array}
   \right)
   =
   \left(
     \begin{array}{ccc|ccc|ccc}
       p_{111} & p_{121} & p_{131} & p_{112} & p_{122} & p_{132} & p_{113} & p_{123} & p_{133} \\
       p_{211} & p_{221} & p_{231} & p_{212} & p_{222} & p_{232} & p_{213} & p_{223} & p_{233} \\
       p_{311} & p_{321} & p_{331} & p_{312} & p_{322} & p_{332} & p_{313} & p_{323} & p_{333} \\
     \end{array}
   \right)
 $$
As noted in \cite{kolda} the choice of slices or fibers when unfolding tensors is not as important as the reordering remaining consistent with respect to any defined computation.

\subsection{Maksimov's cubic stochastic matrices.}\label{def_cubic stochastic}
  A  cubic matrix
 $\textbf{P}=(p_{ijk})_{i,j,k=1}^n$ is said to be:
 \begin{enumerate}
 \item[(i)]  {\sl stochastic of type (1,2)} if
 $p_{ijk}\geq0$ and $\sum_{i,j=1}^np_{ijk}=1$ for all $k=1,\ldots,n$.
  \item[(ii)]  {\sl stochastic of type (2,3)} if
 $p_{ijk}\geq0$ and $\sum_{j,k=1}^np_{ijk}=1$ for all $i=1,\ldots,n$.
 \item[(iii)]  {\sl stochastic of type (1,3)} if
 $p_{ijk}\geq0$ and $\sum_{i,k=1}^np_{ijk}=1$ for all $j=1,\ldots,n$.
  \end{enumerate}
These matrices were first introduced by Maksimov \cite{m_cubic} to model physical systems
 such that interactions within the system could not be described in terms of Markov processes. Maksimov-like definitions can be extended to describe different classes of (always nonnegative) cubic stochastic matrices.

\subsection{3-stochastic cubic matrices.}\label{3 SCM}  A cubic matrix
 $\textbf{P}=(p_{ijk})_{i,j,k=1}^n$ is said to be 3-{\it stochastic} if if
 $p_{ijk}\geq0$ and $\sum_{k=1}^np_{ijk}=1$ for all $i,j=1,\ldots,n$. Clearly 3-stochastic cubic matrices are in a one-to-one correspondence to quadratic stochastic operators (see \ref{qso}).

\begin{remark}\label{remark on slices} Even being  $\textbf{P}$ cubic stochastic, neither its slices nor its fibers need to retain any stochastic property. Assume for instance that $\textbf{P}$ is cubic stochastic of type (1,2). Then only its frontal slices retain some stochastic nature. Indeed $\{\mathbf{P}_{::k}\}_{k=1}^n$ is a family of bidimensional probability distributions on the set of pairs $\{(i,j)\}_{i,j=1}^n$ (notice that  $\mathbf{P}_{::k}$  is nonnegative  and $\sum_{i,j=1}^n p_{ijk}=1$,   $k=1,\ldots,n$.)
 \end{remark}

\begin{remark}  The third order nature of cubic stochastic matrices increases the number of multiplications that can be defined for these matrices. See for instance \cite{bai}  where the authors consider  different associative multiplication rules defined on cubic matrices. Here  we are only interested in   multiplication rules that are consistent with the different definitions of stochasticity.
 \end{remark}

\subsection{Maksimov $\cdot$ multiplication.}\label{Maksimov multiplication}
Maksimov defined in \cite{m_cubic} the following multiplication for cubic stochastic matrices of type (1,2):
$$ (i,j,k)\cdot (m,r,s)=\delta_{km}(i,j,s) $$
where $(i,j,k)$ denotes the cubic matrix units and $\delta_{km}$ is the Kronecker's delta.
Elementwise, given $\textbf{A}=(a_{ijk})_{i,j,k=1}^n$, $\textbf{B}=(b_{ijk})_{i,j,k=1}^n$  and $\textbf{P}=(p_{ijk})_{i,j,k=1}^n$  such that
 $\textbf{P}=  \textbf{A} \cdot\textbf{B} $, we have
$$p_{ijs}=\sum_{k,r=1}^n a_{ijk}b_{krs}=\sum_{k=1}^n a_{ijk}b_{k+s}$$
where $b_{k+s}=\sum_{r=1}^n b_{krs}$.

\begin{proposition}\label{associativity cdot}   The set of cubic stochastic matrices of type (1,2) with the $\cdot$ multiplication forms a convex semigroup (i.e. a multiplicative associative semigroup such that $\lambda \textbf{A}+(1-\lambda)\textbf{B}$ is again cubic stochastic of type (1,2) for any cubic stochastic matrices $\textbf{A}$ and $\textbf{B}$ of type (1,2) and any $\lambda\in  \mathbb{R} $  such that $0\leq \lambda\leq 1$).
 \end{proposition}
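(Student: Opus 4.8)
The plan is to verify the three defining properties of a convex semigroup in turn: closure under $\cdot$ within the cubic stochastic matrices of type $(1,2)$, associativity of $\cdot$, and stability under convex combinations. Nonnegativity will be immediate throughout, since all entries involved are nonnegative and the operations only add and multiply such entries. The convexity clause is essentially the cubic analogue of the earlier lemma on convex combinations of nonnegative stochastic matrices, so I expect it to require only a one-line check, leaving closure and associativity as the substantive parts.

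For closure, I would write $\textbf{P}=\textbf{A}\cdot\textbf{B}$ with $p_{ijs}=\sum_{k,r=1}^n a_{ijk}b_{krs}$; nonnegativity of the $p_{ijs}$ is then clear. For the type $(1,2)$ column-sum condition I fix $s$ and compute
$$\sum_{i,j=1}^n p_{ijs}=\sum_{k=1}^n\Big(\sum_{i,j=1}^n a_{ijk}\Big)\Big(\sum_{r=1}^n b_{krs}\Big),$$
interchanging finite sums freely. The key step is that the inner sum $\sum_{i,j}a_{ijk}$ equals $1$ for every $k$ because $\textbf{A}$ is stochastic of type $(1,2)$; this collapses the bracket and leaves $\sum_{k,r}b_{krs}$, which in turn equals $1$ because $\textbf{B}$ is stochastic of type $(1,2)$. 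Hence $\sum_{i,j}p_{ijs}=1$ for all $s$, so $\textbf{P}$ is again cubic stochastic of type $(1,2)$. This is the step where the specific type $(1,2)$ normalisation of \emph{both} factors is genuinely used, and it is the main place to be careful about which index is summed against which stochastic condition.

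For associativity, I would exploit that $\cdot$ is defined on the cubic matrix units and extended bilinearly (as the elementwise formula confirms), so it suffices to check the identity on units. A direct computation gives, on one hand,
$$\big((i,j,k)\cdot(m,r,s)\big)\cdot(p,q,t)=\delta_{km}\,(i,j,s)\cdot(p,q,t)=\delta_{km}\delta_{sp}\,(i,j,t),$$
and on the other hand
$$(i,j,k)\cdot\big((m,r,s)\cdot(p,q,t)\big)=\delta_{sp}\,(i,j,k)\cdot(m,r,t)=\delta_{km}\delta_{sp}\,(i,j,t).$$
The two expressions coincide, and associativity for arbitrary cubic matrices then follows by linearity in each argument. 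This reduction to units avoids the triple-sum bookkeeping that a purely elementwise verification would entail.

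Finally, for convexity let $0\le\lambda\le1$ and set $\textbf{C}=\lambda\textbf{A}+(1-\lambda)\textbf{B}$. Each entry $\lambda a_{ijk}+(1-\lambda)b_{ijk}$ is nonnegative, and for each fixed $k$ one has $\sum_{i,j}\big(\lambda a_{ijk}+(1-\lambda)b_{ijk}\big)=\lambda\cdot 1+(1-\lambda)\cdot 1=1$, so $\textbf{C}$ is cubic stochastic of type $(1,2)$. I expect no real obstacle here; the only care needed is the restriction $0\le\lambda\le1$ to preserve nonnegativity. Overall the essential content lies in the closure computation, where the interplay between the contracted index $k$ and the type $(1,2)$ stochasticity of the two factors must be tracked correctly.
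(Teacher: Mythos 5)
Your proof is correct and follows the natural direct-verification route; the paper itself gives no details, simply deferring closure and convexity to Maksimov's Proposition 3 and calling associativity ``a straightforward checking'', so your write-up just supplies explicitly the computations the paper leaves to the reader. In particular your reduction of associativity to the matrix units and your tracking of which type-$(1,2)$ normalisation is used at each stage of the closure computation are exactly right.
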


\begin{proof}   The proof is analogous to that of \cite[Proposition 3]{m_cubic} taking into account that the associativity of $\cdot$ is a straightforward checking.
\end{proof}

\paragraph{} Some other  multiplications were also introduced in \cite{m_cubic} for different types of cubic stochastic matrices.  Elementwise descriptions of these multiplication rules can be found in \cite{ladra_flow,ladra_ACM}. The $\cdot$ multiplication   corresponds to  one of the  associative rules    in \cite{bai}.

\begin{remark} To better understand how the $\cdot$ multiplication acts on cubic stochastic matrices of type (1,2), let us recall the usual multiplication rule $(i,j)(l,k)=\delta_{jl}(i,k)$ on nonnegative column square (i.e. $n\times n$) matrices. Take $\textbf{P}=(p_{ij})_{i,j=1}^n\in NS(n,\mathbb{R})$  or  consider, equivalently, the Markov chain with transition probability matrix $\textbf{P}$. Then the $(i,j)$-th entry $p_{ij}$ of $\textbf{P}$ gives  the transition probability from a state $j$ to a state $i$ and the $(i,j)$-th entry
$p_{ij}^{(m)}$ of $\textbf{P}^{(m)}$, the $m$-th power of  $\textbf{P}$, the probability that a state $j$ reaches a state $i$ after $m$ transitions (i.e. $m$-step transition probabilities).
\end{remark}

\subsection{Probabilistic interpretation of the  $\cdot$ multiplication.}\label{Maksimov prob interpretation}
A probabilistic interpretation of the $\cdot$ multiplication appears in \cite{m_cubic} in terms of transition probabilities for walking particles. Here, however, we will give a different interpretation in a biological setting. (See  \cite{mukhamedov-paper,lyubich,paniello-LAA,paniello-MD}.) Consider a biological system (or population) i.e. a collection of organisms of types $\{1,2,\ldots,n\}$ closed with respect to reproduction, and let $p_{ijk}$ be the probability for a type $k$ individual to come from the mating of (ordered male and female) progenitors having types $i$ and $j$. Then:
\begin{enumerate}
\item[(i)] $\textbf{P}=(p_{ijk})_{i,j,k=1}^n$ is cubic stochastic of type (1,2) with
$$ p_{ijk}=P(father=i,mother=j\mid child=k). $$
\item[(ii)] The $(i,j,k)$-th entry   of $\textbf{P}^{(m,\cdot)}$, $m$-th power of $\textbf{P}$ with respect to the $\cdot $ multiplication, gives the probability for an individual  of type $k$ to appear after $m$
    matings from an initial ordered pair of ancestors of types $i$ and $j$.
\end{enumerate}
More details of this approach will be given in the final section.

\paragraph{}  Maksimov  reduced  cubic matrices to lower dimensional square $n\times n$ matrices obtained by adding the matrix entries along either the first or the second index.

\subsection{Accompanying matrices. Marginal distributions.}\label{def_accompanying}  Let
$\textbf{P}=(p_{ijk})_{i,j,k=1}^n$ be a cubic stochastic matrix of type (1,2). The following matrices were introduced in \cite{m_cubic}:
\begin{enumerate}
\item[(a)] The   first accompanying matrix  is
$\textbf{P}_1=(p_{i+k})_{i,k=1}^n$, with $p_{i+k} =\sum_{j=1}^n
p_{ijk}$.
\item[(b)] The  second accompanying matrix is    $\textbf{P}_2=(p_{+jk})_{j,k=1}^n$, with  $p_{+jk} =\sum_{i=1}^n
p_{ijk}$.
\end{enumerate}
These matrices were called $i$-accompanying and $j$-accompanying in \cite{m_cubic} (and denoted $ \textbf{P}_{(i)}$ and $ \textbf{P}_{(j)}$  respectively), as they were later in \cite{paniello-LAA} and \cite{paniello-MD}. However this notation, as first and second accompanying matrices, seems to better stress on how these matrices are formed. Clearly
 $\sum_{i=1}^n p_{i+k}=\sum_{i=1}^n\sum_{j=1}^np_{ijk}=\sum_{j=1}^np_{+jk}=1$, thus  both accompanying matrices are column stochastic.
 Hence, being also nonnegative, accompanying matrices are in $NS(n,\mathbb{R})$. We usually refer  to both accompanying matrices of a cubic stochastic matrix of type (1,2) to be the {\it marginal distributions} of cubic matrix \cite{paniello-MD}.

\subsection{(1,2)-transposes.}\label{(1,2)-transposes}
Let
 $\textbf{P}=(p_{ijk})_{i,j,k=1}^n$ be a cubic stochastic matrix of type (1,2).
 We define the (1,2)-{\sl transpose} of
$\textbf{P}$ and denote it by $\textbf{P}^{T(1,2)}$ to be the cubic matrix $\textbf{P}^{T(1,2)}=(q_{ijk})_{i,j,k=1}^n$ with $q_{ijk}=p_{jik}$, for all $i,j,k=1,\ldots,n$.  We will say that $\textbf{P}$ is (1,2)-{\sl symmetric} if $\textbf{P}=\textbf{P}^{T(1,2)}$, i.e. $p_{ijk}=p_{jik}$ for all $i,j,k=1,\ldots,n$.

Although actions of (semi)groups on cubic stochastic matrices will be discussed later, we notice here that (1,2)-transpositions   can be seen in terms of the action of $S_3$, the symmetric group of 3 elements, by permuting the matrix entries according to   permutations on the index triples $(i,j,k)$. Then $\textbf{P}^{T(1,2)}$ results from the action of the transposition $\sigma=(1,2) \in S_3$ as follows:
$$ \textbf{P}^{T(1,2)}=\sigma \textbf{P}=(p_{\sigma(i,j,k)})_{i,j,k=1}^n$$ where   $\sigma(i,j,k)=(j,i,k)$.

\begin{proposition}\label{P - PT csm}
 Let
 $\textbf{P}=(p_{ijk})_{i,j,k=1}^n$ be a cubic matrix. Then  $\textbf{P}=(p_{ijk})_{i,j,k=1}^n$ is stochastic  of type (1,2) if and only if so is
$\textbf{P}^{T(1,2)}$.
\end{proposition}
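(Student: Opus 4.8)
The plan is to prove the equivalence by a direct unwinding of the definitions, exploiting the symmetry between the two sums that differ only by swapping the roles of the indices $i$ and $j$. Recall that $\textbf{P}^{T(1,2)}=(q_{ijk})_{i,j,k=1}^n$ is defined by $q_{ijk}=p_{jik}$, so the two matrices share exactly the same multiset of entries; only their positions are permuted. Since stochasticity of type (1,2) is the conjunction of two conditions---nonnegativity of all entries and the normalization $\sum_{i,j=1}^n p_{ijk}=1$ for each fixed $k$---I would verify that the $(1,2)$-transpose preserves each of these two conditions, and that the argument is reversible.

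First I would handle nonnegativity: the entries $q_{ijk}=p_{jik}$ are nonnegative for all $i,j,k$ precisely when the $p_{ijk}$ are, because the map $(i,j,k)\mapsto(j,i,k)$ is a bijection of the index set onto itself. Next I would address the column-sum condition. For a fixed $k$, I would compute
\begin{equation*}
\sum_{i,j=1}^n q_{ijk}=\sum_{i,j=1}^n p_{jik}=\sum_{i,j=1}^n p_{ijk},
\end{equation*}
where the last equality is simply a relabeling of the summation indices $i$ and $j$, which is legitimate because both range independently over $\{1,\ldots,n\}$. Hence $\sum_{i,j=1}^n q_{ijk}=1$ for every $k$ if and only if $\sum_{i,j=1}^n p_{ijk}=1$ for every $k$.

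Combining the two observations, $\textbf{P}^{T(1,2)}$ satisfies both defining conditions exactly when $\textbf{P}$ does, which gives the stated equivalence. I would also note that the cleanest way to phrase this is through the viewpoint introduced just before the statement: since $\textbf{P}^{T(1,2)}=\sigma\textbf{P}$ for the transposition $\sigma=(1,2)\in S_3$, and $\sigma$ fixes the third coordinate $k$ while merely permuting the pair $(i,j)$, it neither changes the set of entries nor alters the value of the double sum $\sum_{i,j}$ over the first two indices for each $k$. This reframing makes the preservation of type (1,2) stochasticity transparent and foreshadows the later group-action perspective.

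Honestly, there is no substantive obstacle here: the result is a bookkeeping statement, and the only point requiring any care is to make explicit that relabeling the dummy indices in the double sum is valid precisely because the sum runs over the full square $\{1,\ldots,n\}^2$ symmetrically in $i$ and $j$ (this would fail for, say, type (1,3) or type (2,3) stochasticity, whose normalizations do not treat $i$ and $j$ symmetrically). I would keep the proof to two or three lines, stressing the reversibility of each implication so that the ``if and only if'' is established in one stroke.
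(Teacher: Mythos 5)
Your proof is correct and follows the same route as the paper, which simply states that the result follows from the definition of a cubic stochastic matrix of type (1,2); you have merely written out explicitly the two observations (bijectivity of the index swap for nonnegativity, and relabeling of the dummy indices in the double sum $\sum_{i,j}$ for the normalization) that the paper leaves implicit. No gaps.
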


\begin{proof} If suffices to use the definition of cubic stochastic matrix of type (1,2) given in \ref{def_cubic stochastic}.
\end{proof}

\begin{lemma}\label{slices and transposes} Let
 $\textbf{P}=(p_{ijk})_{i,j,k=1}^n$ be  a cubic  stochastic matrix of type (1,2). Then:
 \begin{enumerate}
 \item[(i)] $(\textbf{P}^{T(1,2)})_{h::}=\textbf{P}_{:h:}$ for all $h=1,2,\ldots,n$.
  \item[(ii)]$(\textbf{P}^{T(1,2)})_{:h:}=\textbf{P}_{h::}$ for all $h=1,2,\ldots,n$.
   \item[(iii)] $(\textbf{P}^{T(1,2)})_{::h}=(\textbf{P}_{:h:})^T$ for all $h=1,2,\ldots,n$.
     \item[(iv)] If  $\textbf{P}$ is (1,2) symmetric, then its marginal distributions coincide.
  \end{enumerate}
\end{lemma}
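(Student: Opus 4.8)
The plan is to reduce all four parts to elementwise index-chasing, using only the defining relation $q_{ijk}=p_{jik}$ for the entries of $\textbf{P}^{T(1,2)}=(q_{ijk})_{i,j,k=1}^n$ together with the slice definitions recalled above and the accompanying-matrix definitions of \ref{def_accompanying}. The key structural observation I would exploit is that the $(1,2)$-transpose permutes only the first two indices and fixes the third, so its effect on a slice is dictated entirely by which index that slice holds fixed: a transpose of the resulting matrix will appear precisely when the fixed index is the untouched third one.

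For (i) and (ii) no matrix transpose should occur, and I would verify each by comparing $(a,b)$-entries. For (i), the horizontal slice $(\textbf{P}^{T(1,2)})_{h::}$ has $(j,k)$-entry $q_{hjk}=p_{jhk}$, while the lateral slice $\textbf{P}_{:h:}$ has $(i,k)$-entry $p_{ihk}$; after relabelling the shared row index these coincide entry by entry. Part (ii) is the mirror-image computation: $(\textbf{P}^{T(1,2)})_{:h:}$ has $(i,k)$-entry $q_{ihk}=p_{hik}$, which matches the $(j,k)$-entry $p_{hjk}$ of $\textbf{P}_{h::}$. In both cases the two slices involved share the column index $k$, the third (untouched) index, which is exactly why the relation is a plain equality rather than a transpose. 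Part (iv) I would settle through the accompanying matrices: writing the $(a,c)$-entries $p_{a+c}=\sum_{j}p_{ajc}$ of $\textbf{P}_1$ and $p_{+ac}=\sum_{i}p_{iac}$ of $\textbf{P}_2$, the hypothesis $p_{ijk}=p_{jik}$ gives $\sum_{i}p_{iac}=\sum_{i}p_{aic}$, and relabelling the summation index yields $\sum_{j}p_{ajc}$; hence $\textbf{P}_1=\textbf{P}_2$ and the marginal distributions coincide.

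The delicate part, and the one I expect to be the main obstacle, is (iii), because here the fixed index is the third one, i.e. precisely the index the $(1,2)$-transpose leaves alone, so interchanging the first two indices within a frontal ($k$ fixed) array is genuine matrix transposition. Computing directly, $(\textbf{P}^{T(1,2)})_{::h}$ has $(i,j)$-entry $q_{ijh}=p_{jih}$. I would then match this against the stated right-hand side $(\textbf{P}_{:h:})^{T}$, whose $(i,j)$-entry is $(\textbf{P}_{:h:})_{ji}=p_{jhi}$. The crux of (iii) is exactly this comparison: the direct computation lands on $p_{jih}$, whereas the printed right-hand side produces $p_{jhi}$, and these two expressions agree for every $\textbf{P}$ only when $\textbf{P}$ is symmetric in its last two indices, a property not among the hypotheses. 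What the computation does certify unconditionally is that the fixed index $h$ sits in the third tensor slot on the left, so the object obtained is the ordinary transpose of the frontal slice that fixes the third index; reconciling this with the lateral-slice notation on the right therefore forces careful bookkeeping of which index each slice holds fixed, and this is where I would concentrate the care of the argument.
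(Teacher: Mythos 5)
Your elementwise computations for (i), (ii) and (iv) are correct, and they are essentially the only available argument; note that the paper itself gives no computation at all for this lemma, deferring instead to Lemma 1 and Lemma 2 of \cite{paniello-EO}, so your write-up is strictly more self-contained than the printed proof. Your suspicion about (iii) is also well founded, and you should not hedge it behind ``careful bookkeeping'': with the paper's own conventions ($(\textbf{P}^{T(1,2)})_{ijk}=p_{jik}$ and $\textbf{P}_{:h:}=(p_{ihk})_{i,k=1}^n$), the $(i,j)$-entry of $(\textbf{P}^{T(1,2)})_{::h}$ is $p_{jih}$ while that of $(\textbf{P}_{:h:})^T$ is $p_{jhi}$, exactly as you computed, and these differ in general. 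A concrete counterexample settles it: for $n=2$ take $p_{121}=p_{222}=1$ and all other entries $0$; this is stochastic of type (1,2), yet $(\textbf{P}^{T(1,2)})_{::1}$ has a $1$ in position $(2,1)$ while $\textbf{P}_{:1:}$ is the zero matrix, so $(\textbf{P}_{:1:})^T=\textbf{0}$. Hence (iii) as printed is false; the statement your computation proves unconditionally, namely $(\textbf{P}^{T(1,2)})_{::h}=(\textbf{P}_{::h})^T$, is the correct form, and the printed $\textbf{P}_{:h:}$ is evidently a typo for $\textbf{P}_{::h}$. This reading is corroborated internally: the proof of Theorem \ref{actions on slices}(ii) invokes Lemma \ref{slices and transposes}(iii) precisely in the form $(\textbf{P}_{::k})^T=(\textbf{P}^{T(1,2)})_{::k}$.

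Two small refinements. First, in your final paragraph replace the conditional discussion (``agree only when $\textbf{P}$ is symmetric in its last two indices'') by the counterexample above plus the one-line proof of the corrected identity, $q_{ijh}=p_{jih}=(\textbf{P}_{::h})_{ji}$; as it stands your text diagnoses the problem but stops short of resolving it. Second, your direct summation argument for (iv) is fine, but it is worth observing that (iv) also drops out of the corrected (iii): if $\textbf{P}=\textbf{P}^{T(1,2)}$ then each frontal slice $\textbf{P}_{::k}$ is a symmetric matrix, and since the $k$-th columns of $\textbf{P}_1$ and $\textbf{P}_2$ are respectively the row sums and column sums of $\textbf{P}_{::k}$, the two accompanying matrices coincide.
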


\begin{proof} See Lemma 1 and Lemma 2 in \cite{paniello-EO}.
\end{proof}

\paragraph{ }
Marginal distributions provide a different approach to the study cubic stochastic matrices of type (1,2).
For instance, in \cite{paniello-MD}    partial solutions to the ergodicity of these  cubic matrices
were obtained by considering the bivariate Markov model defined by the marginal distributions.

\begin{theorem}\label{genetic BMC}{\rm \cite[Theorem 1]{paniello-MD}}  Let  $\textbf{P}$ be a   cubic stochastic matrix of type (1,2).
Then
$$\mathbf{X}_{t+1}=\left(%
\begin{array}{l}
 \mathbf{X}_{t+1}^{(1)}  \\
 \mathbf{X}_{t+1}^{(2)} \\
\end{array}
\right)
=\left(%
\begin{array}{cc}
  \lambda_{11} \textbf{P}^{(11)} & \lambda_{12}\textbf{P}^{(12)}  \\
 \lambda_{21}  \textbf{P}^{(21)} & \lambda_{22}\textbf{P}^{(22)}  \\
\end{array}
\right)%
\left(%
\begin{array}{l}
 \mathbf{X}_t^{(1)}  \\
 \mathbf{X}_t^{(2)} \\
\end{array}%
\right)$$
where
$\textbf{P}^{(11)}=\textbf{P}^{(12)} $ is    the  first accompanying matrix of $\textbf{P}$ and $\textbf{P}^{(21)} =\textbf{P}^{(22)} $  the   second accompanying matrix of $\textbf{P}$, provides a bivariate Markov model whenever $\lambda_{ij}\geq0$ for $1\leq i,j\leq 2$ and  $\sum_{j=1}^2\lambda_{ij}=1 $ for $i=1,2$.
\end{theorem}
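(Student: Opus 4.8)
The plan is to recognize the asserted system as the special case $s=2$ of the multivariate Markov model of \ref{MM  models}, and then to verify that the two ingredients demanded there---transition blocks lying in $NS(n,\mathbb{R})$ and admissible weights---are both present. Half of this is immediate: the coefficients $\lambda_{ij}$ are assumed to satisfy $\lambda_{ij}\geq 0$ and $\sum_{j=1}^2\lambda_{ij}=1$ for $i=1,2$, which are precisely the constraints imposed on the $\lambda_{jk}$ in \ref{MM  models}, so nothing has to be proved about the weights.

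The substantive step is to confirm that each block of the $2\times 2$ array is a genuine transition probability matrix, i.e. an element of $NS(n,\mathbb{R})$. I would set $\textbf{P}^{(11)}=\textbf{P}^{(12)}=\textbf{P}_1$, the first accompanying matrix, and $\textbf{P}^{(21)}=\textbf{P}^{(22)}=\textbf{P}_2$, the second accompanying matrix of $\textbf{P}$, and then invoke the computation recorded in \ref{def_accompanying}: since $\textbf{P}$ is cubic stochastic of type $(1,2)$ we have $p_{i+k}\geq 0$, $p_{+jk}\geq 0$ together with $\sum_{i=1}^n p_{i+k}=\sum_{i,j=1}^n p_{ijk}=1$ and $\sum_{j=1}^n p_{+jk}=1$ for every $k$. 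Hence both accompanying matrices are nonnegative and column stochastic, so they lie in $NS(n,\mathbb{R})$, and the array meets the definition of \ref{MM  models} verbatim for $s=2$.

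To make the Markovian character transparent I would finally check that the recursion preserves the simplex. Using the equalities $\textbf{P}^{(i1)}=\textbf{P}^{(i2)}$, row $i$ may be rewritten as $\textbf{X}_{t+1}^{(i)}=\textbf{P}_i\bigl(\lambda_{i1}\textbf{X}_t^{(1)}+\lambda_{i2}\textbf{X}_t^{(2)}\bigr)$; this rewriting is exactly what lets each marginal transition depend only on its own accompanying matrix. If $\textbf{X}_t^{(1)},\textbf{X}_t^{(2)}\in S^{n-1}$, then because $\lambda_{i1}+\lambda_{i2}=1$ with nonnegative weights the inner combination is a convex combination of probability vectors, hence again a probability vector (the vector analogue of the convexity lemma for stochastic matrices); applying the column stochastic matrix $\textbf{P}_i$ returns a probability vector, so $\textbf{X}_{t+1}^{(1)},\textbf{X}_{t+1}^{(2)}\in S^{n-1}$. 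I do not anticipate any genuine obstacle, since the whole argument is a matching of the proposed system against the definition in \ref{MM  models}, the only non-formal input being the stochasticity of the accompanying matrices already established in \ref{def_accompanying}; the sole point needing care is the bookkeeping of which accompanying matrix occupies which block and the verification that summing $p_{ijk}$ over a single index leaves the relevant column sums equal to one.
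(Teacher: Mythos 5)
Your argument is correct and is the natural one: the paper itself states this theorem without proof (it is imported verbatim from the cited reference \cite[Theorem 1]{paniello-MD}), so the only verification required is exactly what you do --- checking via \ref{def_accompanying} that both accompanying matrices lie in $NS(n,\mathbb{R})$ and that the weights meet the constraints of the multivariate Markov model of \ref{MM  models} with $s=2$. Your closing observation that $\textbf{X}_{t+1}^{(i)}=\textbf{P}_i\bigl(\lambda_{i1}\textbf{X}_t^{(1)}+\lambda_{i2}\textbf{X}_t^{(2)}\bigr)$ preserves the simplex is a harmless extra confirmation, not a gap.
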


\paragraph{ } Despite the fact that
 the relationship  given in Theorem \ref{genetic BMC}  between cubic stochastic matrices of type (1,2) and bivariate Markov matrices allows us to recover some information on the  cubic matrices (see for instance results on cubic matrices ergodicity in \cite[Theorem 2]{paniello-MD}), this connection is not enough to achieve a complete characterization of  cubic  matrices just looking at their marginal distributions. A similar lack of completeness follows when  one considers the already mentioned cubic matrix multiplications, as neither the above introduced $\cdot$ multiplication nor any of the remaining multiplication rules defined in \cite{m_cubic}, totally comprise the overall stochastic nature of the entries of a cubic stochastic matrix of type (1,2). To overcome   drawbacks of the $ \cdot$ multiplication,     $(i,j,k)\cdot (m,r,s)=\delta_{km}(i,j,s)$, only considering non-vanishing products (interactions) when the first index of the second (right) factor is involved and paying no attention to its middle index, the following multiplication was defined in \cite{paniello-MD} by weighting interactions involving also   middle  indices.

\subsection{$\star$ multiplication.}\label{star def}  The $\star$ product of two cubic stochastic matrices of type (1,2) is defined as follows:
$$ (i,j,k)  \star (m,r,s)
=
    \frac{1}{2}(\delta_{km} +  \delta_{kr} )(i,j,s).
$$

\begin{remark}\label{star remark}
  Notice that
\begin{align*}
 (i,j,k)  \star (m,r,s) &=  (i,j,k)  \cdot \Big(\frac{1}{2}(m,r,s)+ \frac{1}{2}(m,r,s)^{T(1,2)}  \Big) = \\
  &=  (i,j,k)  \cdot \Big(\frac{1}{2}(m,r,s)+ \frac{1}{2}(r,m,s)  \Big) = \\
&=   \frac{1}{2}(\delta_{km} +  \delta_{kr} )(i,j,s).
\end{align*}
Thus it comes out  that given  cubic stochastic matrices $\textbf{P}= (p_{ijk})_{i,j,k=1}^n$
and $\textbf{Q}= (q_{ijk})_{i,j,k=1}^n$    of type (1,2),   the $(i,j,k)$-th entry of
$\textbf{P} \star\textbf{Q}$ can be written as:
 \begin{align*} (\textbf{P} \star\textbf{Q})_{ijk}& =   \frac{1}{2} \Big(
 \sum_{r,s=1}^np_{ijr}q_{rsk}+ \sum_{r,s=1}^np_{ijr}q_{srk} \Big)
 = \frac{1}{2} \sum_{r =1}^n(  p_{ijr}q_{r+k}+ \sum_{r,s=1}^np_{ijr}q_{+rk})\\
 &=  \frac{1}{2} (p_{ij1},\ldots,p_{ijn})
 \left(
   \begin{array}{c}
      \left(
                                                   \begin{array}{c}
                                                     q_{1+k} \\
                                                     q_{2+k}  \\
                                                     \vdots \\
                                                    q_{n+k} \\
                                                   \end{array}
                                                 \right)
                                                 +
     \left(
                                                   \begin{array}{c}
                                                     q_{+1k} \\
                                                     q_{+2k}  \\
                                                     \vdots \\
                                                    q_{+nk} \\
                                                   \end{array}
                                                 \right) \\
   \end{array}
 \right)\\
      &= (\textbf{P}_{ij:})^T \Big(  \frac{1}{2} (\textbf{Q}_1)_{:k}  +\frac{1}{2}  (\textbf{Q}_2)_{:k} \Big).
  \end{align*}
 Hence $(\textbf{P} \star\textbf{Q})_{ijk}$ is a equally weighted product of the $ij$-tube $\textbf{P}_{ij:} $ of $\textbf{P}$ and the $k$-th columns of the  accompanying matrices of $\textbf{Q}$. (Here we extend the notation introduced in \ref{CM decomposition} to square matrices.)
 \end{remark}

\begin{proposition} Let $\textbf{P} $
and $\textbf{Q} $   be cubic stochastic matrices  of type (1,2). Then   $\textbf{P}  \star\textbf{Q} $ is again  cubic stochastic  of type (1,2).
\end{proposition}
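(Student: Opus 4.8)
The plan is to work directly from the elementwise description of the $\star$ product obtained in Remark \ref{star remark}, namely
$$(\textbf{P} \star \textbf{Q})_{ijk} = \frac{1}{2}\sum_{r=1}^n p_{ijr}\,(q_{r+k} + q_{+rk}),$$
where $q_{r+k} = \sum_{s=1}^n q_{rsk}$ and $q_{+rk} = \sum_{s=1}^n q_{srk}$ are precisely the $(r,k)$-entries of the first and second accompanying matrices $\textbf{Q}_1$ and $\textbf{Q}_2$ of $\textbf{Q}$. Two things must be checked to conclude that $\textbf{P} \star \textbf{Q}$ is stochastic of type (1,2) in the sense of \ref{def_cubic stochastic}: that every entry is nonnegative, and that $\sum_{i,j=1}^n (\textbf{P} \star \textbf{Q})_{ijk} = 1$ for each fixed $k$.

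Nonnegativity is immediate. Since $\textbf{P}$ and $\textbf{Q}$ are nonnegative, each $p_{ijr} \geq 0$, and each accompanying-matrix entry $q_{r+k}$, $q_{+rk}$ is a sum of nonnegative $q_{\cdot\cdot\cdot}$ and hence nonnegative; so the whole expression above is nonnegative.

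For the column condition I would sum over the pair $(i,j)$ and interchange the order of summation so as to factor out the inner $r$-sum:
$$\sum_{i,j=1}^n (\textbf{P} \star \textbf{Q})_{ijk} = \frac{1}{2}\sum_{r=1}^n \Big(\sum_{i,j=1}^n p_{ijr}\Big)(q_{r+k} + q_{+rk}).$$
The type (1,2) stochasticity of $\textbf{P}$ gives $\sum_{i,j=1}^n p_{ijr} = 1$ for every $r$, collapsing the right-hand side to $\frac{1}{2}\sum_{r=1}^n (q_{r+k} + q_{+rk})$. The last step is to invoke \ref{def_accompanying}: both accompanying matrices of $\textbf{Q}$ are column stochastic, so $\sum_{r=1}^n q_{r+k} = \sum_{r,s=1}^n q_{rsk} = 1$ and $\sum_{r=1}^n q_{+rk} = \sum_{r,s=1}^n q_{srk} = 1$ by the type (1,2) stochasticity of $\textbf{Q}$. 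Hence the total equals $\frac{1}{2}(1+1) = 1$, as required.

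I do not expect a genuine obstacle here; once the entry formula is in hand the verification is routine. The only point needing a little care is the index bookkeeping: the two Kronecker deltas in the definition of $\star$ feed exactly the two accompanying matrices of $\textbf{Q}$, and it is precisely because each of them sums a full frontal slice of $\textbf{Q}$ to $1$ that the $\tfrac{1}{2}$-weighting preserves the normalization. The same argument would in fact go through verbatim for any convex weighting $\alpha\,\delta_{km} + (1-\alpha)\,\delta_{kr}$ in place of $\frac{1}{2}(\delta_{km} + \delta_{kr})$, which explains why such weighted rules were singled out as stochasticity-preserving.
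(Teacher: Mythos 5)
Your proof is correct and follows essentially the same route as the paper: check nonnegativity, sum $(\textbf{P}\star\textbf{Q})_{ijk}$ over $(i,j)$ for fixed $k$, interchange the order of summation to use $\sum_{i,j}p_{ijr}=1$, and then use the type (1,2) stochasticity of $\textbf{Q}$ to conclude the total is $\tfrac{1}{2}(1+1)=1$. The only cosmetic difference is that you collapse the $s$-index into the accompanying-matrix entries $q_{r+k}$, $q_{+rk}$ before summing, whereas the paper keeps the full double sum $\sum_{r,s}(q_{rsk}+q_{srk})$; the computation is the same.
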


\begin{proof} Clearly   $\textbf{P}  \star\textbf{Q} $ is a nonnegative matrix, hence it suffices to prove that $\sum_{i,j=1}^n (\textbf{P}  \star\textbf{Q})_{ijk}=1$ for all $k=1, \ldots,n$, but being both $\textbf{P} $
and $\textbf{Q} $   cubic stochastic of type (1,2) we have
  \begin{align*} \sum_{i,j=1}^n (\textbf{P}  \star\textbf{Q})_{ijk} &= \sum_{i,j=1}^n \frac{1}{2} \sum_{r,s=1}^n \big(p_{ijr}q_{rsk}+p_{ijr}q_{srk} \big)\\
&= \frac{1}{2}  \sum_{r,s=1}^n \Big( \big(\sum_{i,j=1}^n p_{ijr} \big)q_{rsk}+ \big(\sum_{i,j=1}^n p_{ijr} \big)q_{srk} \Big)\\
&=
\frac{1}{2}  \sum_{r,s=1}^n \big(  q_{rsk}+ q_{srk} \big)=1,
  \end{align*} which implies that $\textbf{P}  \star\textbf{Q} $ is again  cubic stochastic  of type (1,2).
\end{proof}

\begin{lemma}\label{star multiplication} The following statements hold:
\begin{enumerate} \item[(i)] The   $\star$ multiplication is associative. Moreover cubic stochastic matrices of type (1,2) form a  semigroup under $\star$.
\item[(ii)] Let $\textbf{P}$ and  $\textbf{Q} $  be   cubic stochastic matrices of type (1,2). If  $\textbf{Q} $  is (1,2)-symmetric, then
$\textbf{P}  \star\textbf{Q} = \textbf{P}   \cdot\textbf{Q} $.
\item[(iii)]  Let $\textbf{P}$ be a cubic stochastic matrix of type (1,2). If $\textbf{P}$  is (1,2)-symmetric, then the $m$-th powers $\textbf{P}^{(m,\cdot)}$  and $\textbf{P}^{(m,\star)}$ of $\textbf{P}$, with respect to $\cdot$ and $\star$ respectively, coincide.
\end{enumerate}
\end{lemma}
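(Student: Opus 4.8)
The plan is to exploit the observation, already recorded in Remark \ref{star remark}, that the product $\textbf{P}\star\textbf{Q}$ depends on the right factor $\textbf{Q}$ only through the averaged accompanying matrix $\Phi(\textbf{Q}):=\frac{1}{2}(\textbf{Q}_1+\textbf{Q}_2)\in NS(n,\mathbb{R})$; explicitly, $(\textbf{P}\star\textbf{Q})_{ijk}=\sum_{l=1}^n p_{ijl}\,\Phi(\textbf{Q})_{lk}$. The analogous formula for the $\cdot$ multiplication is $(\textbf{P}\cdot\textbf{Q})_{ijk}=\sum_{l=1}^n p_{ijl}(\textbf{Q}_1)_{lk}$, read off directly from \ref{Maksimov multiplication}. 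Parts (ii) and (iii) drop out quickly from these two formulas, so the heart of the matter is the associativity asserted in (i).

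For (i) the key step is to show that $\Phi$ is multiplicative, i.e. $\Phi(\textbf{P}\star\textbf{Q})=\Phi(\textbf{P})\,\Phi(\textbf{Q})$ as an ordinary product of square matrices. To this end I would first compute the two accompanying matrices of $\textbf{P}\star\textbf{Q}$. Summing $(\textbf{P}\star\textbf{Q})_{ijk}=\sum_l p_{ijl}\Phi(\textbf{Q})_{lk}$ over $j$ gives $\big((\textbf{P}\star\textbf{Q})_1\big)_{ik}=\sum_l(\textbf{P}_1)_{il}\Phi(\textbf{Q})_{lk}$, that is $(\textbf{P}\star\textbf{Q})_1=\textbf{P}_1\Phi(\textbf{Q})$; summing over $i$ yields likewise $(\textbf{P}\star\textbf{Q})_2=\textbf{P}_2\Phi(\textbf{Q})$. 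Averaging and factoring out then gives $\Phi(\textbf{P}\star\textbf{Q})=\frac{1}{2}(\textbf{P}_1+\textbf{P}_2)\Phi(\textbf{Q})=\Phi(\textbf{P})\Phi(\textbf{Q})$. With this in hand associativity is immediate: expanding the tubes twice,
$$\big((\textbf{P}\star\textbf{Q})\star\textbf{R}\big)_{ijk}=\sum_l(\textbf{P}\star\textbf{Q})_{ijl}\Phi(\textbf{R})_{lk}=\sum_{l,m}p_{ijm}\Phi(\textbf{Q})_{ml}\Phi(\textbf{R})_{lk}=\sum_m p_{ijm}\big(\Phi(\textbf{Q})\Phi(\textbf{R})\big)_{mk},$$
whereas $\big(\textbf{P}\star(\textbf{Q}\star\textbf{R})\big)_{ijk}=\sum_m p_{ijm}\Phi(\textbf{Q}\star\textbf{R})_{mk}=\sum_m p_{ijm}\big(\Phi(\textbf{Q})\Phi(\textbf{R})\big)_{mk}$ by multiplicativity of $\Phi$; the two coincide by associativity of the ordinary matrix product over $\mathbb{R}$. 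Combined with the closure already established in the preceding Proposition, this gives the semigroup structure.

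For (ii), if $\textbf{Q}$ is (1,2)-symmetric then its marginal distributions coincide, $\textbf{Q}_1=\textbf{Q}_2$, by Lemma \ref{slices and transposes}(iv); hence $\Phi(\textbf{Q})=\textbf{Q}_1$ and the $\star$-formula collapses into the $\cdot$-formula, so $\textbf{P}\star\textbf{Q}=\textbf{P}\cdot\textbf{Q}$. For (iii) I would induct on $m$, writing the powers as right multiplications $\textbf{P}^{(m,\star)}=\textbf{P}^{(m-1,\star)}\star\textbf{P}$ and $\textbf{P}^{(m,\cdot)}=\textbf{P}^{(m-1,\cdot)}\cdot\textbf{P}$, both well defined thanks to the associativity from (i) and to Proposition \ref{associativity cdot}. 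Since the right factor $\textbf{P}$ is (1,2)-symmetric, part (ii) gives $\textbf{P}^{(m-1,\star)}\star\textbf{P}=\textbf{P}^{(m-1,\star)}\cdot\textbf{P}$, and the induction hypothesis $\textbf{P}^{(m-1,\star)}=\textbf{P}^{(m-1,\cdot)}$ then yields $\textbf{P}^{(m,\star)}=\textbf{P}^{(m,\cdot)}$, completing the induction.

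The only genuine obstacle is the bookkeeping behind the multiplicativity of $\Phi$: one must keep track of which middle index is being summed when passing between the tube description of $\star$ and the two accompanying matrices, and check that the weight $\tfrac12$ factors cleanly through the ordinary matrix product. Once $\Phi$ is recognised as a homomorphism into $NS(n,\mathbb{R})$, the remaining assertions reduce to the associativity of matrix multiplication and a one-line induction.
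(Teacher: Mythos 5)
Your proof is correct, and for part (i) it takes a genuinely different route from the paper. The paper disposes of associativity by declaring it a case-by-case checking on the cubic matrix units, similar to Proposition \ref{associativity cdot}, proves (ii) by the same elementwise computation you give, and obtains (iii) from (ii) plus well-definedness of powers. You instead isolate the map $\Phi(\textbf{Q})=\tfrac{1}{2}(\textbf{Q}_1+\textbf{Q}_2)$ and observe that $\star$ factors through it, $(\textbf{P}\star\textbf{Q})_{ijk}=\sum_l p_{ijl}\Phi(\textbf{Q})_{lk}$, so that the identities $(\textbf{P}\star\textbf{Q})_1=\textbf{P}_1\Phi(\textbf{Q})$ and $(\textbf{P}\star\textbf{Q})_2=\textbf{P}_2\Phi(\textbf{Q})$ yield $\Phi(\textbf{P}\star\textbf{Q})=\Phi(\textbf{P})\Phi(\textbf{Q})$ and reduce associativity of $\star$ to associativity of ordinary matrix multiplication; all of these computations check out, and $\Phi(\textbf{Q})$ does lie in $NS(n,\mathbb{R})$ because it is a convex combination of the two column stochastic accompanying matrices. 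What your approach buys is a conceptual reason for associativity, a homomorphism statement of independent interest given the paper's later focus on marginal distributions (Proposition \ref{action MD}, Theorem \ref{action BMC}), and a proof that transfers verbatim to every weighted product $\star_{(\lambda_1,\lambda_2)}$ of \ref{WM} upon replacing $\Phi(\textbf{Q})$ by $\lambda_1\textbf{Q}_1+\lambda_2\textbf{Q}_2$, which is exactly what Proposition \ref{weighted multiplication} needs; the paper's unit-level check is more elementary but opaque and must in principle be redone for each weighting. Your parts (ii) and (iii) coincide in substance with the paper's argument, your explicit induction in (iii) merely spelling out what the paper leaves implicit.
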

\begin{proof} The proof of (i) is just a case-by-case checking
 similar to that of Proposition \ref{associativity cdot}.  Write now $\textbf{P}= (p_{ijk})_{i,j,k=1}^n$
and $\textbf{Q}= (q_{ijk})_{i,j,k=1}^n$, Then if $q_{ijk}=q_{jik}$, for all $i,j,k=1,\ldots,n$, we have:
 \begin{align*}  (\textbf{P}  \star\textbf{Q})_{ijk} &=  \frac{1}{2} \sum_{r,s=1}^n \big(p_{ijr}q_{rsk}+p_{ijr}q_{srk} \big)\\
&= \frac{1}{2}  \sum_{r,s=1}^n \big(  p_{ijr}q_{rsk}+p_{ijr}q_{srk} \big) = \sum_{r,s=1}^n   p_{ijr}q_{rsk} =(\textbf{P}   \cdot\textbf{Q})_{ijk}
  \end{align*}
 Hence the (1,2)-symmetry of $\textbf{Q}$ ensures that $\textbf{P}  \star\textbf{Q} = \textbf{P}   \cdot\textbf{Q} $.

 Finally to prove (iii) it suffices to note that $m$-th powers of $\textbf{P}$ with respect to both multiplication rules are well-defined, as a result of the associativity, and apply then (ii).
 \end{proof}

\paragraph{} Clearly the $\star$ multiplication can be defined for arbitrary $n\times n\times n$ matrices. But being mainly concerned with stochastic matrices,  we have already restricted the definition to these matrices.

\subsection{${\rm CS_{(1,2)}}(n,\mathbb{R})$.}\label{CS semigroup}
We will denote by ${\rm CS_{(1,2)}}(n,\mathbb{R}) $ the set of all cubic stochastic matrices of type (1,2) with real entries and by
$({\rm CS_{(1,2)}(n,\mathbb{R})},\star)$ the semigroup formed with the $\star$ multiplication. Notice
it is a (right) monoid, as
  $\textbf{P}= (p_{ijk})_{i,j,k=1}^n$, with $p_{ijk}=1$ if $i=j=k$ and $p_{ijk}=0$ otherwise (see \cite[Figure 2.4]{kolda}) is cubic stochastic of type (1,2) and acts as a right (but not a left) identity element for
  $\star$.

\subsection{Weighted multiplications.}\label{WM}
The $\star$ multiplication is   a particular case of a more general weighted product
$$(i,j,k)  \star_{(\lambda_1,\lambda_2)} (m,r,s) =  (i,j,k)  \cdot \Big(\lambda_1(m,r,s)+ \lambda_2(m,r,s)^{T(1,2)}  \Big) $$
 for segregation coefficients $\lambda_1,\lambda_2\geq0$, such that $\lambda_1+\lambda_2=1$. Elementwise, given
$\textbf{P}, \textbf{Q}\in {\rm CS_{(1,2)}}(n,\mathbb{R})$,
 \begin{align*} (\textbf{P} \star_{(\lambda_1,\lambda_2)} \textbf{Q})_{ijk}& =
  \lambda_1\sum_{r,s=1}^n p_{ijr}q_{rsk}+  \lambda_2\sum_{r,s=1}^n p_{ijr}q_{srk}=\\
  &=\sum_{r,s=1}^n p_{ijr} ( \lambda_1q_{rsk}+  \lambda_2q_{srk})= \sum_{r=1}^n p_{ijr} ( \lambda_1q_{r+k}+  \lambda_2q_{+rk})\\
 &=  (p_{ij1},\ldots,p_{ijn})
 \left(
   \lambda_1\begin{array}{c}
      \left(
                                                   \begin{array}{c}
                                                     q_{1+k} \\
                                                     q_{2+k}  \\
                                                     \vdots \\
                                                    q_{n+k} \\
                                                   \end{array}
                                                 \right)
                                                 + \lambda_2
     \left(
                                                   \begin{array}{c}
                                                     q_{+1k} \\
                                                     q_{+2k}  \\
                                                     \vdots \\
                                                    q_{+nk} \\
                                                   \end{array}
                                                 \right) \\
   \end{array}
 \right)\\
      &=  (\textbf{P}_{ij:})^T \Big( \lambda_1 (\textbf{Q}_1)_{:k}  + \lambda_2(\textbf{Q}_2)_{:k} \Big) .
  \end{align*}
 Clearly $\star$ arises in the equally weighted  case $ \lambda_1= \lambda_2=\frac{1}{2}$, so that
 $\star=\star_{(\frac{1}{2},\frac{1}{2})}$, whereas Maksimov's $\cdot$ multiplication corresponds to the case $(\lambda_1,\lambda_2)=(1,0)$, i.e.
 $\cdot=\star_{(1,0)}$.

With a similar proof to that of Lemma \ref{star multiplication} the following results follow.

\begin{proposition}\label{weighted multiplication} Let $\lambda_1,\lambda_2$ be nonnegative real numbers such that $\lambda_1+\lambda_2=1$ and
$\textbf{P}, \textbf{Q} \in {\rm CS_{(1,2)}}(n,\mathbb{R})$. Then:

\begin{enumerate} \item[(i)] $\textbf{P} \star_{(\lambda_1,\lambda_2)} \textbf{Q}$ is again in  ${\rm CS_{(1,2)}}(n,\mathbb{R})$.
\item[(ii)] The  $ \star_{(\lambda_1,\lambda_2)}$ multiplication is associative and the set of   cubic stochastic matrices of type (1,2) form a  monoid  under the multiplication $ \star_{(\lambda_1,\lambda_2)}$.
\item[(ii)]   If  $\textbf{Q} $  is (1,2)-symmetric, then
$\textbf{P}  \star_{(\lambda_1,\lambda_2)}\textbf{Q} = \textbf{P}   \cdot\textbf{Q} $.
\item[(iii)]   If $\textbf{P}$  is (1,2)-symmetric, then
$\textbf{P}^{(m,\cdot)}=\textbf{P}^{(m,\star_{(\lambda_1,\lambda_2)})}$, where $\textbf{P}^{(m,\cdot)}$ and $\textbf{P}^{(m,\star_{(\lambda_1,\lambda_2)})}$ denote the $m$-th powers of  $\textbf{P}$  with respect to $\cdot$ and $\star_{(\lambda_1,\lambda_2)}$ respectively.
\end{enumerate}
\end{proposition}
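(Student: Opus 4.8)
The plan is to mirror the proof of Lemma \ref{star multiplication} (and of the unnumbered stochasticity proposition preceding it), the only new ingredient being to carry the segregation coefficients $\lambda_1,\lambda_2$ through the computations and to invoke the normalization $\lambda_1+\lambda_2=1$ at the right moment. For the stochasticity claim (i) I would start from the elementwise formula for $\textbf{P}\star_{(\lambda_1,\lambda_2)}\textbf{Q}$ displayed in \ref{WM}: nonnegativity is immediate because $\lambda_1,\lambda_2\geq0$ and all entries $p_{ijr},q_{rsk}$ are nonnegative. For the column sums I would sum over $i,j$, interchange the finite sums, and use that $\sum_{i,j}p_{ijr}=1$ for every $r$ (as $\textbf{P}$ is of type (1,2)) to reduce the expression to $\lambda_1\sum_{r,s}q_{rsk}+\lambda_2\sum_{r,s}q_{srk}$; since $\textbf{Q}$ is of type (1,2) both inner sums equal $1$, so the total is $\lambda_1+\lambda_2=1$.

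For the conversion statement (the part asserting $\textbf{P}\star_{(\lambda_1,\lambda_2)}\textbf{Q}=\textbf{P}\cdot\textbf{Q}$ when $\textbf{Q}$ is (1,2)-symmetric) I would substitute $q_{srk}=q_{rsk}$ into the same elementwise formula; the two terms then merge into $(\lambda_1+\lambda_2)\sum_{r,s}p_{ijr}q_{rsk}=\sum_{r,s}p_{ijr}q_{rsk}$, which is exactly $(\textbf{P}\cdot\textbf{Q})_{ijk}$ by the description in \ref{Maksimov multiplication}. This is precisely where $\lambda_1+\lambda_2=1$ is used.

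The substantive part is the associativity asserted in (ii). Here I would verify it on cubic matrix units, as in Proposition \ref{associativity cdot}, first recording that $(i,j,k)\star_{(\lambda_1,\lambda_2)}(m,r,s)=(\lambda_1\delta_{km}+\lambda_2\delta_{kr})(i,j,s)$. Multiplying three units $(a,b,c)$, $(d,e,f)$, $(g,h,\ell)$ in either association then yields the common value $(\lambda_1\delta_{cd}+\lambda_2\delta_{ce})(\lambda_1\delta_{fg}+\lambda_2\delta_{fh})(a,b,\ell)$: the first two output indices $a,b$ always descend from the leftmost factor, the last output index $\ell$ from the rightmost, and the two scalar coupling factors are independent, so their product is insensitive to the order of multiplication. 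Associativity then extends bilinearly to all matrices. For the monoid claim I would note that the element exhibited in \ref{CS semigroup} (the cubic matrix whose $(i,j,k)$-entry is $1$ exactly when $i=j=k$) is (1,2)-symmetric and is a right identity for $\cdot$ by a one-line check; hence, via the conversion statement just proved, it is a right identity for $\star_{(\lambda_1,\lambda_2)}$ as well, giving the (right) monoid structure.

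Finally, for the coincidence of powers when $\textbf{P}$ is (1,2)-symmetric, I would argue by induction on $m$, the powers being well-defined by associativity. The base case $m=1$ is trivial; for the step, writing $\textbf{P}^{(m+1,\star_{(\lambda_1,\lambda_2)})}=\textbf{P}^{(m,\star_{(\lambda_1,\lambda_2)})}\star_{(\lambda_1,\lambda_2)}\textbf{P}$, I would apply the induction hypothesis to the left factor and then the conversion statement to the right factor $\textbf{P}$ (symmetric by assumption) to turn the final $\star_{(\lambda_1,\lambda_2)}$ into a $\cdot$, obtaining $\textbf{P}^{(m,\cdot)}\cdot\textbf{P}=\textbf{P}^{(m+1,\cdot)}$. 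The only place requiring genuine care is the index bookkeeping in the associativity verification; once the units are written out the weights factor cleanly and all remaining items are a direct transcription of the $\star$ arguments with $\lambda_1,\lambda_2$ inserted.
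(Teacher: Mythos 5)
Your proposal is correct and follows exactly the route the paper intends: the paper's own "proof" is just the remark that the results follow "with a similar proof to that of Lemma \ref{star multiplication}", and your argument is a faithful transcription of those computations (stochasticity by interchanging sums, conversion to $\cdot$ under (1,2)-symmetry using $\lambda_1+\lambda_2=1$, associativity checked on matrix units, and induction for the powers). No gaps; the unit-level associativity bookkeeping you flag as the delicate point does close as you describe.
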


\subsection{$({\rm CS_{(1,2)}}(n,\mathbb{R})  ,\star_{(\lambda_1,\lambda_2)})$.}
For any nonnegative $\lambda_1,\lambda_2$ such that$\lambda_1+\lambda_2=1$, we will denote by
$({\rm CS_{(1,2)}}(n,\mathbb{R})  ,\star_{(\lambda_1,\lambda_2)})$ the monoid formed by the set ${\rm CS_{(1,2)}}(n,\mathbb{R}) $ with the $\star_{(\lambda_1,\lambda_2)}$ multiplication. Recall that  the case $\lambda_1=\lambda_2=\frac{1}{2}$ corresponds to
$({\rm CS_{(1,2)}}(n,\mathbb{R})  ,\star )$.

\paragraph{ } Despite  the different weightings that can be considered on cubic matrices, our main interest will continue focused on the equally weighted $\star$ case. This product is the one arisen from the problem originally motivating our interest in cubic matrices, due to their role
when studying the transference of genetic inheritance in backwards Mendelian populations (see \cite{paniello-LAA,tian_coalgebras}).

 \paragraph{ } Products involving tensors having different dimensions were already considered in \cite{braman}, where   stress
 was on the action of $n\times n\times n$ tensors on $n\times n $ matrices. In the following section we will deal with an opposite problem, as we will consider actions on cubic stochastic matrices by square matrices.

\section{Actions on cubic stochastic matrices.}

In this section we   consider different actions of the semigroup  (of nonnegative column square  stochastic matrices)
 $NS(n,\mathbb{R})$  on the set
   ${\rm CS_{(1,2)}}(n,\mathbb{R})$ of  cubic stochastic matrices of type (1,2), paying also attention
   to how these actions behave on slices and marginal distributions of the cubic matrices.

\subsection{Group actions.} We recall that a group $(G,\circ)$ acts on a set $S$ if $g\cdot s\in S$ is defined for all $g\in G$ and $s\in S$ and we have
$e\cdot s=s$ and $(g_1\circ g_2)\cdot s=g_1\cdot (g_2 \cdot s)\in S$ for all $s\in S$ and $ g_1,g_2\in G$, where $e$ denotes the identity of $G$.

A similar definition holds if $G$ is a semigroup with identity element. All semigroups appearing in this section  have identity element. Notice that if $G$ is just a semigroup, as not every element $g\in G$ needs to have an inverse $g^{-1}$ in $G$, not all actions by elements of $G$ on $S$ will be reversible, but just those given by invertible elements.

\begin{theorem}\label{action def}
 Let $\{(i,j)\}_{i,j=1}^n$ and $\{(i,j,k)\}_{i,j,k=1}^n$ be the square and cubic matrix units. Then, extended by linearity,
 \begin{enumerate}
 \item[(i)]  $ (i,j)  \circledast_1  (r,s,t)=\delta_{jr} (i,s,t)  $
 \item[(ii)] $ (i,j)  \circledast_2  (r,s,t)=\delta_{js} (r,i,t)  $
 \end{enumerate}
 define actions of the   semigroup $NS(n,\mathbb{R})$  on the set  ${\rm CS_{(1,2)}}(n,\mathbb{R})$ of
 cubic stochastic matrices of type (1,2).
 \end{theorem}

\begin{remark}\label{action def remark 1}
We notice that $\circledast_2 $ can be obtained  from $\circledast_1 $ (and vice versa) by conjugation  given by the (1,2)-transposition.  Indeed, denoting by $T(1,2)$   the (1,2)-transposition on ${\rm CS_{(1,2)}}(n,\mathbb{R})$, we have
$$   (i,j)  \circledast_2  (r,s,t)=     \Big[(i,j)  \circledast_1 \big[  (r,s,t)  ^{T(1,2)} \big] \Big]^{T(1,2)}                 $$
thus $ \circledast_2 = T(1,2) \circ  \circledast_1 \circ T(1,2)$, or equivalently $T(1,2)\circ  \circledast_2 =\circledast_1 \circ T(1,2) $, considering  here (1,2)-transposes as the result of the action of the transposition $(1,2)\in S_3$ on the cubic matrix indices (see \ref{(1,2)-transposes}).
\end{remark}

\begin{remark}\label{action def remark 2} Actions
  $\circledast_1 $ and $\circledast_2 $
  can be alternatively defined using direct product notation as follows:
  \begin{enumerate}
 \item[(i)]  $ \big( (i,j)\times 1\times1  \big)\circledast_1  (r,s,t)=\delta_{jr} (i,s,t)  $
 \item[(ii)] $ \big( 1\times (i,j)\times1  \big) \circledast_2  (r,s,t)=\delta_{js} (r,i,t)  $
 \end{enumerate}
  where tensor product components act on the different indices of the triple $(r,s,t)$. Thus, given $\textbf{A}=(a_{ij})_{i,j=1}^n\in NS(n,\mathbb{R}) $
  and $\textbf{P}=(p_{ijk})_{i,j,k=1}^n\in{\rm CS_{(1,2)}}(n,\mathbb{R}) $, we have
    \begin{enumerate}
 \item[(i)]  $ \textbf{A} \circledast_1 \textbf{P }=\big( \textbf{A} \times \textbf{I}_n  \times \textbf{I}_n   \big)\cdot \textbf{P }= \big(  \sum_{r=1}^na_{ir}p_{rst}\big)_{i,s,t=1}^n $
 \item[(ii)] $ \textbf{A} \circledast_2 \textbf{P }=\big(  \textbf{I}_n  \times  \textbf{A} \times \textbf{I}_n  \big)\cdot   \textbf{P }= \big(  \sum_{s=1}^na_{is}p_{rst}\big)_{r,i,t=1}^n $
 \end{enumerate}
 where $ \textbf{I}_n  $ denotes the $n\times n$ identity matrix. Using this alternative approach to   $\circledast_1 $ and $\circledast_2 $,  as actions of
 $NS(n,\mathbb{R})\times NS(n,\mathbb{R})\times NS(n,\mathbb{R})$ on the set of cubic matrices,  Theorems \ref{action def} and \ref{action th2} follow easily. However detailed proofs are also included for completeness. Moreover, see Remark \ref{action def remark 1}, we have
 $\big(\textbf{A} \circledast_2 \textbf{P } \big)^{T(1,2)}=\textbf{A} \circledast_1 \big( \textbf{P }^{T(1,2)}\big)$, where, as noted before, $T(1,2)$ can be understood as the action of the transposition $(1,2)\in S_3$  on the cubic matrix indices.
\end{remark}

\begin{proof}
Let $\textbf{P}=(p_{ijk})_{i,j,k=1}^n$ be a cubic stochastic matrix of type (1,2) and let
 $\textbf{A}=(a_{ij})_{i,j=1}^n$  be in $NS(n,\mathbb{R})$ (so we have $\sum_{i=1}^na_{ij}=1$ for all $j=1,\ldots,n$). We first claim that both
     $  \textbf{A} \circledast_1 \textbf{P}$ and $  \textbf{A} \circledast_2 \textbf{P}$ are cubic stochastic of type (1,2). Indeed, if we write $  \textbf{A} \circledast_1 \textbf{P} =(q_{ist})_{i,s,t=1}^n$, then $  \textbf{A} \circledast_1 \textbf{P}$ has nonnegative entries  $q_{ist}=\sum_{r=1}^na_{ir}p_{rst}$ satisfying
 $$\sum_{i,s=1}^n q_{ist} =\sum_{i,s=1}^n\sum_{r=1}^n a_{ir}p_{rst} =\sum_{r,s=1}^n \big(\sum_{i=1}^n a_{ir}\big)p_{rst} =\sum_{r,s=1}^n p_{rst}=1.$$
 Hence
$  \textbf{A} \circledast_1 \textbf{P}$ is cubic stochastic of type (1,2).
Similarly we prove that $  \textbf{A} \circledast_2 \textbf{P} = (w_{rit})_{r,i,t=1}^n$
with $w_{rit}=\sum_{s=1}^n a_{is}p_{rst}$   is  cubic stochastic of type (1,2). (Proposition  \ref{P - PT csm} and Remark \ref{action def remark 1} also apply here.)

Clearly  $  \textbf{I}_n \circledast_1 \textbf{P}  = \textbf{P} =   \textbf{I}_n \circledast_2 \textbf{P}$ and, therefore, it only remains to be checked that
 $$ (i,j) \circledast_1 \Big[ (r,s)\circledast_1(t,u,v)   \Big] =\Big[ (i,j) (r,s) \Big]\circledast_1(t,u,v),  $$
 and
  $$ (i,j) \circledast_2 \Big[ (r,s)\circledast_2(t,u,v)   \Big] =\Big[ (i,j) (r,s) \Big]\circledast_2(t,u,v),  $$ hold.
Hence   both $\circledast_1$ and $\circledast_2$ define actions of $NS(n,\mathbb{R})$ on  ${\rm CS_{(1,2)}}(n,\mathbb{R})$.
\end{proof}

\paragraph{ }
Taking into account the semigroup structure of both
$NS(n,\mathbb{R})$ and  ${\rm CS_{(1,2)}(n,\mathbb{ R})}$ the next theorem follows.

\begin{theorem}\label{action th2}   $\circledast_1$  and $\circledast_2$  define
actions of  the semigroup $NS(n,\mathbb{R})$ of  nonnegative   column square stochastic matrices on
 $({\rm CS_{(1,2)}(n,\mathbb{ R})},\star)$.
\end{theorem}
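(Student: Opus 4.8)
The plan is to separate cleanly what is already established from what is genuinely new. The underlying action axioms—that $\textbf{I}_n$ acts as the identity and that $(\textbf{A}\textbf{B})\circledast_i \textbf{P}=\textbf{A}\circledast_i(\textbf{B}\circledast_i \textbf{P})$ for $i=1,2$—are precisely the content of Theorem \ref{action def} and require nothing further. What must be added in order to upgrade these to actions on the \emph{semigroup} $({\rm CS_{(1,2)}}(n,\mathbb{R}),\star)$ is the compatibility of each action with the $\star$ multiplication, namely
$$\textbf{A}\circledast_i(\textbf{P}\star\textbf{Q})=(\textbf{A}\circledast_i \textbf{P})\star\textbf{Q},\qquad i=1,2,$$
for all $\textbf{A}\in NS(n,\mathbb{R})$ and $\textbf{P},\textbf{Q}\in{\rm CS_{(1,2)}}(n,\mathbb{R})$. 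Note this is a one-sided compatibility: the action commutes through $\star$ onto the left factor, and it is \emph{not} an endomorphism of $({\rm CS_{(1,2)}}(n,\mathbb{R}),\star)$, since the first accompanying matrix of $\textbf{A}\circledast_1 \textbf{Q}$ equals $\textbf{A}\,\textbf{Q}_1\neq\textbf{Q}_1$ in general.

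I would argue from the index structure rather than expand everything blindly. The key observation is that in $(\textbf{P}\star\textbf{Q})_{ijk}$ the first two indices $i,j$ are inherited entirely from $\textbf{P}$: by Remark \ref{star remark} this entry equals $(\textbf{P}_{ij:})^T\bigl(\tfrac{1}{2}(\textbf{Q}_1)_{:k}+\tfrac{1}{2}(\textbf{Q}_2)_{:k}\bigr)$, so $i$ and $j$ enter only through the tube $\textbf{P}_{ij:}$ while $\textbf{Q}$ contributes only through its accompanying matrices in the third slot. Since $\circledast_1$ modifies the first index and $\circledast_2$ the second (Remark \ref{action def remark 2}), each action touches only indices supplied by $\textbf{P}$ and can therefore be pulled across the product onto the left factor. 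Concretely, for $\circledast_1$ I would write $(\textbf{A}\circledast_1(\textbf{P}\star\textbf{Q}))_{ijk}=\sum_l a_{il}(\textbf{P}\star\textbf{Q})_{ljk}$, substitute the defining sum of $\star$, and interchange the (finite) summations so that $\sum_l a_{il}p_{ljr}$ collapses to $(\textbf{A}\circledast_1 \textbf{P})_{ijr}$; recognizing the remaining factor as the $\star$ product with $\textbf{Q}$ gives $((\textbf{A}\circledast_1 \textbf{P})\star\textbf{Q})_{ijk}$. The case $\circledast_2$ is identical after relabelling, with $\sum_s a_{js}p_{isr}$ collapsing to $(\textbf{A}\circledast_2 \textbf{P})_{ijr}$.

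The one point that needs care—and the one I would flag as the main (minor) obstacle—is the interaction of $\circledast_2$ with the symmetrization $q_{rsk}+q_{srk}$ built into $\star$. One must check that acting on the second index of the product does not disturb this symmetrization. It does not: the symmetrization is performed over the summed-out first two indices of $\textbf{Q}$, which are independent of the free action index $j$, so the two operations commute and the collapse described above goes through unchanged. With the displayed compatibility established for $i=1,2$, and since both $\circledast_i$ and $\star$ map ${\rm CS_{(1,2)}}(n,\mathbb{R})$ to itself (Theorem \ref{action def} and the proposition preceding Lemma \ref{star multiplication}), the action axioms inherited from Theorem \ref{action def} together with this compatibility yield the asserted actions of $NS(n,\mathbb{R})$ on $({\rm CS_{(1,2)}}(n,\mathbb{R}),\star)$.
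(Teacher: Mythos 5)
Your proposal is correct and follows essentially the same route as the paper: the paper derives this theorem directly from Theorem \ref{action def} together with the semigroup structures, and states the compatibility $\textbf{A}\circledast_i(\textbf{P}\star\textbf{Q})=(\textbf{A}\circledast_i\textbf{P})\star\textbf{Q}$ as a separate proposition proved by ``straightforward checking,'' which is exactly the index computation you carry out. Your additional observation that the compatibility is one-sided (the action is not a $\star$-endomorphism, since $(\textbf{A}\circledast_1\textbf{Q})_1=\textbf{A}\textbf{Q}_1$) is accurate and a useful clarification, but does not change the substance of the argument.
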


\paragraph{ } The above defined actions  act independently on $({\rm CS_{(1,2)}(n,\mathbb{ R})},\star)$ and coincide on (1,2)-symmetric matrices.

\begin{proposition}\label{actions conmute}
Actions $\circledast_1$ and $\circledast_2$   commute    on the set of cubic  stochastic matrices of type (1,2), i.e.:
$$  (i,j)  \circledast_1\Big((k,l)  \circledast_2 (m,r,s) \Big) = (k,l)  \circledast_2 \Big ((i,j)  \circledast_1(m,r,s) \Big). $$
Hence $  \textbf{B}\circledast_2\big(\textbf{A}\circledast_1\textbf{P} \big)=\textbf{A}\circledast_1\big(\textbf{B}\circledast_2\textbf{P}\big)$ for any stochastic matrices $ \textbf{A},\textbf{B}\in  NS(n,\mathbb{R})  $   and $ \textbf{P}\in  {\rm CS_{(1,2)}(n,\mathbb{ R})}$.
\end{proposition}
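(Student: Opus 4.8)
The plan is to verify the identity on the matrix units, where everything collapses to tracking two Kronecker deltas, and then pass to arbitrary $\textbf{A},\textbf{B}\in NS(n,\mathbb{R})$ and $\textbf{P}\in{\rm CS_{(1,2)}}(n,\mathbb{R})$ by bilinearity, since both $\circledast_1$ and $\circledast_2$ were defined by linear extension in Theorem \ref{action def}. First I would evaluate the left-hand side from the inside out. The definition of $\circledast_2$ gives $(k,l)\circledast_2(m,r,s)=\delta_{lr}(m,k,s)$, and then $\circledast_1$ pulls the scalar $\delta_{lr}$ out by linearity and acts as $(i,j)\circledast_1(m,k,s)=\delta_{jm}(i,k,s)$, so the left side equals $\delta_{lr}\delta_{jm}(i,k,s)$. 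Symmetrically, the right-hand side starts from $(i,j)\circledast_1(m,r,s)=\delta_{jm}(i,r,s)$ and continues with $(k,l)\circledast_2(i,r,s)=\delta_{lr}(i,k,s)$, yielding $\delta_{jm}\delta_{lr}(i,k,s)$. The two outputs coincide, which proves the identity on units; the bilinear extension then delivers $\textbf{B}\circledast_2(\textbf{A}\circledast_1\textbf{P})=\textbf{A}\circledast_1(\textbf{B}\circledast_2\textbf{P})$ for all admissible $\textbf{A},\textbf{B},\textbf{P}$.

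The conceptual reason behind this, and the fastest way to double-check the index bookkeeping, is the tensor-product description of Remark \ref{action def remark 2}: $\circledast_1$ amounts to left-multiplying the first slot of the triple, $(\textbf{A}\times\textbf{I}_n\times\textbf{I}_n)\cdot\textbf{P}$, while $\circledast_2$ left-multiplies the second slot, $(\textbf{I}_n\times\textbf{B}\times\textbf{I}_n)\cdot\textbf{P}$. Both compositions therefore reduce to $(\textbf{A}\times\textbf{B}\times\textbf{I}_n)\cdot\textbf{P}$, because the cross terms each contain a factor $\textbf{A}\cdot\textbf{I}_n=\textbf{A}$ or $\textbf{I}_n\cdot\textbf{B}=\textbf{B}$ and operations on distinct slots commute. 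If an elementwise verification is preferred instead, I would start from $(\textbf{A}\circledast_1\textbf{P})_{ist}=\sum_r a_{ir}p_{rst}$ and $(\textbf{B}\circledast_2\textbf{P})_{rit}=\sum_s b_{is}p_{rst}$ and substitute one into the other; both orders produce $\sum_{r,\sigma}a_{ir}b_{s\sigma}p_{r\sigma t}$ after relabeling the free indices.

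Since the computation is entirely mechanical, I do not expect a genuine obstacle; the one point that needs care is the index bookkeeping, namely confirming that when $\circledast_1$ acts it leaves the second slot (the one $\circledast_2$ operates on) untouched, and conversely. The matrix-unit calculation makes this transparent, because the delta produced by one action never involves the index consumed by the other, so the two actions are genuinely independent.
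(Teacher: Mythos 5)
Your proposal is correct and, in its second paragraph, reproduces exactly the paper's one-line argument via Remark \ref{action def remark 2}, namely that $(\textbf{A}\times\textbf{I}_n\times\textbf{I}_n)$ and $(\textbf{I}_n\times\textbf{B}\times\textbf{I}_n)$ commute because they act on distinct slots. The explicit matrix-unit computation $\delta_{lr}\delta_{jm}(i,k,s)$ on both sides is a correct and slightly more detailed verification of the same fact, so the two proofs are essentially identical.
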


\begin{proof}  Using Remark \ref{action def remark 2} it suffices to notice that $( \textbf{A}\times  \textbf{I}_n\times   \textbf{I}_n)(
 \textbf{I}_n\times  \textbf{B}\times \textbf{I}_n)=(
 \textbf{I}_n\times  \textbf{B}\times \textbf{I}_n)( \textbf{A}\times  \textbf{I}_n\times   \textbf{I}_n)$ in $NS(n,\mathbb{R})\times NS(n,\mathbb{R}) \times NS(n,\mathbb{R})$.
\end{proof}

\begin{corollary} Let    $\textbf{P} $  be a cubic stochastic matrix of type (1,2). If $\textbf{P} $  is (1,2)-symmetric, then
$\textbf{A}\circledast_2\textbf{P}=(\textbf{A}\circledast_1\textbf{P})^{T(1,2)}$ for any  $\textbf{A}\in NS(n,\mathbb{R})$.
\end{corollary}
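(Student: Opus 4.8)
The plan is to deduce the statement directly from the conjugation relation already recorded in Remark \ref{action def remark 2}, namely
$$\big(\textbf{A}\circledast_2\textbf{P}\big)^{T(1,2)}=\textbf{A}\circledast_1\big(\textbf{P}^{T(1,2)}\big),$$
valid for every $\textbf{A}\in NS(n,\mathbb{R})$ and every cubic stochastic matrix $\textbf{P}$ of type (1,2). First I would feed in the hypothesis: since $\textbf{P}$ is (1,2)-symmetric, the definition in \ref{(1,2)-transposes} gives $\textbf{P}^{T(1,2)}=\textbf{P}$, so the right-hand side simplifies and the relation becomes $\big(\textbf{A}\circledast_2\textbf{P}\big)^{T(1,2)}=\textbf{A}\circledast_1\textbf{P}$.

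The second step is to apply the (1,2)-transposition once more to both sides. For this I would use that $T(1,2)$ is an involution, i.e. $\big(\textbf{Q}^{T(1,2)}\big)^{T(1,2)}=\textbf{Q}$ for every cubic matrix $\textbf{Q}$; this is immediate from the entrywise description $q_{ijk}\mapsto q_{jik}\mapsto q_{ijk}$, or from viewing $T(1,2)$ as the action of the transposition $(1,2)\in S_3$, an element of order two. Transposing the identity $\big(\textbf{A}\circledast_2\textbf{P}\big)^{T(1,2)}=\textbf{A}\circledast_1\textbf{P}$ then yields $\textbf{A}\circledast_2\textbf{P}=\big(\textbf{A}\circledast_1\textbf{P}\big)^{T(1,2)}$, which is exactly the assertion.

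Should one prefer a self-contained argument, I would instead verify the identity entrywise using the formulas of Remark \ref{action def remark 2}. Writing $\textbf{A}\circledast_2\textbf{P}=(w_{rit})$ with $w_{rit}=\sum_{s=1}^n a_{is}p_{rst}$ and $\textbf{A}\circledast_1\textbf{P}=(v_{ist})$ with $v_{ist}=\sum_{r=1}^n a_{ir}p_{rst}$, the $(i,j,k)$ entry of $\textbf{A}\circledast_2\textbf{P}$ is $w_{ijk}=\sum_{s=1}^n a_{js}p_{isk}$, while the $(i,j,k)$ entry of $\big(\textbf{A}\circledast_1\textbf{P}\big)^{T(1,2)}$ is $v_{jik}=\sum_{r=1}^n a_{jr}p_{rik}$. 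Relabelling the summation variable, these two expressions coincide precisely when $p_{isk}=p_{sik}$ for all indices, which is the (1,2)-symmetry of $\textbf{P}$; thus the hypothesis is exactly the condition needed. There is no substantial obstacle here: the only point demanding care is the bookkeeping of the index permutation induced by $T(1,2)$, together with the observation that it is the symmetry of $\textbf{P}$ (rather than of $\textbf{A}$ or of the product) that makes the two sums agree.
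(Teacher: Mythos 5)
Your proof is correct and follows essentially the same route as the paper, which simply cites the conjugation relation $\circledast_2 = T(1,2)\circ\circledast_1\circ T(1,2)$ from the remarks; you spell out the two steps (substituting $\textbf{P}^{T(1,2)}=\textbf{P}$ and applying the involution $T(1,2)$) that the paper leaves implicit. The optional entrywise verification is also correct and confirms that the $(1,2)$-symmetry of $\textbf{P}$ is exactly what is needed.
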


\begin{proof}The result follows from Remark \ref{action def remark 1}.
\end{proof}

\paragraph{ }
Next we deal with the interaction between the $\circledast_1$  and $\circledast_2$  actions and the semigroup structure $({\rm CS_{(1,2)}(n,\mathbb{R})},\star)$  of the set of cubic stochastic matrices of type (1,2) under the $\star$ multiplication.

\begin{proposition}  Actions $\circledast_1$  and $\circledast_2$  are consistent with the $\star$ multiplication of cubic stochastic matrices of type (1,2):
\begin{enumerate}
\item[(i)] $  (i,j)\circledast_1 \Big((m,r,s)\star (p,q,k)   \Big) =\Big(   (i,j)\circledast_1  (m,r,s)     \Big)\star (p,q,k)$
\item[(ii)] $  (i,j)\circledast_2 \Big( (m,r,s)\star (p,q,k)   \Big) =\Big(   (i,j)\circledast_2   (m,r,s)     \Big)\star (p,q,k)$
\end{enumerate}
\end{proposition}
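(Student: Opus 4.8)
The plan is to verify both identities directly on the cubic matrix units $(m,r,s)$, $(p,q,k)$ and the square matrix unit $(i,j)$, since the actions $\circledast_1$ and $\circledast_2$, like the $\star$ multiplication, are defined on units and extended bilinearly; hence consistency on units propagates to arbitrary $\textbf{A}\in NS(n,\mathbb{R})$ and $\textbf{P},\textbf{Q}\in {\rm CS_{(1,2)}}(n,\mathbb{R})$ by linearity.

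For (i), I would first compute the left-hand side by applying $\star$ and then $\circledast_1$. Since
$$(m,r,s)\star(p,q,k)=\tfrac{1}{2}(\delta_{sp}+\delta_{sq})(m,r,k),$$
applying $\circledast_1$ yields
$$(i,j)\circledast_1\big((m,r,s)\star(p,q,k)\big)=\tfrac{1}{2}(\delta_{sp}+\delta_{sq})\,\delta_{jm}\,(i,r,k).$$
On the right-hand side I would apply $\circledast_1$ first, obtaining $(i,j)\circledast_1(m,r,s)=\delta_{jm}(i,r,s)$, and then $\star$, which gives $\delta_{jm}\,\tfrac{1}{2}(\delta_{sp}+\delta_{sq})(i,r,k)$. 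The two expressions coincide.

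The point I would stress, rather than the arithmetic, is why the two operations decouple. The action $\circledast_1$ only alters the \emph{first} index of its cubic argument (replacing $m$ by $i$ with contraction $\delta_{jm}$) and leaves the third index untouched, whereas the $\star$ product matches the \emph{third} index $s$ of its left factor against the first two indices $p,q$ of its right factor while carrying over the first two indices $(m,r)$ of the left factor intact. Thus the scalar $\delta_{jm}$ created by $\circledast_1$ and the scalar $\tfrac{1}{2}(\delta_{sp}+\delta_{sq})$ created by $\star$ involve disjoint indices and factor out independently of the order in which the two operations are performed. Statement (ii) follows by the identical computation with $\circledast_2$ in place of $\circledast_1$: here $\circledast_2$ modifies only the \emph{second} index, since $(i,j)\circledast_2(m,r,s)=\delta_{jr}(m,i,s)$, which is again a spectator for the $\star$ contraction, and both sides reduce to $\tfrac{1}{2}\,\delta_{jr}(\delta_{sp}+\delta_{sq})(m,i,k)$.

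Alternatively, both identities can be read off from the direct-product description in Remark \ref{action def remark 2}: $\circledast_1$ and $\circledast_2$ amount to left multiplication by $\textbf{A}\times\textbf{I}_n\times\textbf{I}_n$ and $\textbf{I}_n\times\textbf{A}\times\textbf{I}_n$ acting on the first and second tensor slots, while $\star$ contracts the third slot of the left factor, so the two manipulations act on disjoint slots. There is no genuine obstacle in this proof; the only care required is the bookkeeping needed to confirm that the index moved by each action is distinct from the third index $s$ that drives the $\star$ matching, which is exactly what makes the two operations interchangeable.
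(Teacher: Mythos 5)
Your verification is correct and is exactly the ``straightforward checking'' that the paper's proof consists of: a direct computation on the matrix units $(i,j)$, $(m,r,s)$, $(p,q,k)$, showing both sides of (i) equal $\tfrac{1}{2}\delta_{jm}(\delta_{sp}+\delta_{sq})(i,r,k)$ and both sides of (ii) equal $\tfrac{1}{2}\delta_{jr}(\delta_{sp}+\delta_{sq})(m,i,k)$, extended by linearity. Your added observation that the actions touch only the first (resp.\ second) index while $\star$ contracts the third index of the left factor --- so the two Kronecker deltas involve disjoint indices --- is a correct and useful explanation of \emph{why} the check succeeds, consistent with the direct-product description in Remark \ref{action def remark 2}.
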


\begin{proof} The proof is a straightforward checking.
\end{proof}

\paragraph{} As noted in Remark \ref{remark on slices}, the family of frontal slices
$\{\textbf{P}_{::k}\}_{k=1}^n$ of a cubic stochastic matrix $\textbf{P}$ of type (1,2) gives rise to a family of bidimensional probability distributions. (Notice  this is in fact equivalent to $\textbf{P}$ being stochastic of type (1,2).) Actions
$\circledast_1$  and $\circledast_2$ act on the frontal slices of $\textbf{P}$  independently.

\begin{theorem}\label{actions on slices} Let $\textbf{P}$  be  a cubic stochastic matrix of type (1,2) and $\textbf{A}\in NS(n,\mathbb{R})$. Then the following statements hold:
\begin{enumerate}
\item[(i)] $\big( \textbf{A}\circledast_1  \textbf{P}\big)_{::k}=\textbf{A} \textbf{P}_{::k} $.
\item[(ii)] $\big( \textbf{A}\circledast_2  \textbf{P}\big)_{::k}=\textbf{A}\big( \textbf{P}_{::k}\big)^T= \textbf{A}\big( \textbf{P}^{T(1,2)}\big)_{::k} $.
\end{enumerate}
\end{theorem}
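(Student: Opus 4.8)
The plan is to prove both identities by reading off the frontal slices directly from the coordinate formulas for the two actions recorded in Remark \ref{action def remark 2}, and then recognizing the resulting double sums as ordinary matrix products. Writing $\textbf{A}=(a_{ij})$, recall that $\textbf{A}\circledast_1\textbf{P}=\big(\sum_r a_{ir}p_{rst}\big)_{i,s,t=1}^n$. For (i) I would fix the third coordinate $t=k$: the frontal slice of $\textbf{A}\circledast_1\textbf{P}$ then has $(i,s)$-entry $\sum_r a_{ir}p_{rsk}$. Since the $(r,s)$-entry of $\textbf{P}_{::k}$ is exactly $p_{rsk}$, this double sum is precisely the $(i,s)$-entry of the matrix product $\textbf{A}\textbf{P}_{::k}$, giving $(\textbf{A}\circledast_1\textbf{P})_{::k}=\textbf{A}\textbf{P}_{::k}$. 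This step is the routine one and fixes the index conventions (first index $=$ row, second index $=$ column) that the rest of the argument relies on.

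For (ii) I would prefer to reuse (i) rather than recompute from scratch. By Remark \ref{action def remark 1} the two actions are conjugate under the $(1,2)$-transposition, namely $\big(\textbf{A}\circledast_2\textbf{P}\big)^{T(1,2)}=\textbf{A}\circledast_1\big(\textbf{P}^{T(1,2)}\big)$. Taking frontal slices of both sides, using the elementary identity $\big(\textbf{M}^{T(1,2)}\big)_{::k}=\big(\textbf{M}_{::k}\big)^T$ (immediate from the definition of the $(1,2)$-transpose, since the $(i,j)$-entry of $\textbf{M}^{T(1,2)}$ at level $k$ is $m_{jik}$), and then applying part (i) to the matrix $\textbf{P}^{T(1,2)}$, I would reduce (ii) to the already-proved (i). The same identity $\big(\textbf{P}^{T(1,2)}\big)_{::k}=\big(\textbf{P}_{::k}\big)^T$ supplies the second equality $\textbf{A}\big(\textbf{P}_{::k}\big)^T=\textbf{A}\big(\textbf{P}^{T(1,2)}\big)_{::k}$ for free, so it need only be invoked once.

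The main obstacle is purely a matter of transpose bookkeeping. The action $\circledast_2$ modifies the \emph{middle} index, so the contraction of $\textbf{A}$ against $\textbf{P}$ lands on the second slot and naturally produces a transpose; the delicate point is to track consistently which slice index plays the role of matrix row and which of column, and to insert the identity $\big(\textbf{P}^{T(1,2)}\big)_{::k}=\big(\textbf{P}_{::k}\big)^T$ at exactly the right place, so that the transpose coming from $\circledast_2$ and the transpose coming from slicing $\textbf{M}^{T(1,2)}$ combine correctly into the stated right-hand side. I would carry out a small $n=2$ check to confirm the orientation of the final transpose before committing to the displayed form.
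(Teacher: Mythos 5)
Your proof of (i) is correct and is essentially the paper's own argument: read off the $(i,s)$-entry $\sum_{r}a_{ir}p_{rsk}$ of the frontal slice from the coordinate formula of Remark \ref{action def remark 2} and recognize it as $(\textbf{A}\textbf{P}_{::k})_{is}$. For (ii) you take a genuinely different route: the paper just writes down the coordinate formula $w_{rik}=\sum_{s}a_{is}p_{rsk}$ for $\textbf{A}\circledast_2\textbf{P}$ and reads off the slice, whereas you deduce (ii) from (i) via the conjugation $(\textbf{A}\circledast_2\textbf{P})^{T(1,2)}=\textbf{A}\circledast_1\big(\textbf{P}^{T(1,2)}\big)$ together with the slice identity $\big(\textbf{M}^{T(1,2)}\big)_{::k}=\big(\textbf{M}_{::k}\big)^T$. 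That strategy is sound and arguably cleaner (and your form of the slice identity is the right one; Lemma \ref{slices and transposes}(iii) as printed has $\textbf{P}_{:h:}$ where $\textbf{P}_{::h}$ must be meant).

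However, the transpose bookkeeping you defer to an $n=2$ check is not a formality, and it does not come out in favour of the displayed right-hand side. Your own chain gives
$$\big((\textbf{A}\circledast_2\textbf{P})_{::k}\big)^T=\big((\textbf{A}\circledast_2\textbf{P})^{T(1,2)}\big)_{::k}=\big(\textbf{A}\circledast_1(\textbf{P}^{T(1,2)})\big)_{::k}=\textbf{A}\big(\textbf{P}^{T(1,2)}\big)_{::k}=\textbf{A}\big(\textbf{P}_{::k}\big)^T,$$
that is, $(\textbf{A}\circledast_2\textbf{P})_{::k}=\textbf{P}_{::k}\textbf{A}^T$. The direct computation agrees: with $\textbf{A}\circledast_2\textbf{P}=(w_{rik})_{r,i,k=1}^n$, $w_{rik}=\sum_{s}a_{is}p_{rsk}$, the frontal slice has row index $r$ and column index $i$, and $\sum_{s}p_{rsk}a_{is}=(\textbf{P}_{::k}\textbf{A}^T)_{ri}$, which is the $(r,i)$-entry of the \emph{transpose} of $\textbf{A}(\textbf{P}_{::k})^T$. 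So what your argument actually establishes is the transpose of the equality as displayed; the two sides of (ii) coincide only when $\textbf{A}(\textbf{P}_{::k})^T$ happens to be symmetric. To finish, you must either record the conclusion as $(\textbf{A}\circledast_2\textbf{P})_{::k}=\textbf{P}_{::k}\textbf{A}^T=\big(\textbf{A}(\textbf{P}^{T(1,2)})_{::k}\big)^T$, or state explicitly that you are reading the slice of $\textbf{A}\circledast_2\textbf{P}$ with its second index as the row index. As it stands the proposal stops one essential step short of the identity in the form stated.
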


\begin{proof}
Let $\textbf{P}=(p_{ijk})_{i,j,k=1}^n$ and $\textbf{A}=(a_{ij})_{i,j=1}^n$. The above equalities follow from noticing that
$\textbf{A}\circledast_1  \textbf{P}=(q_{ijk})_{i,s,k=1}^n$ with $q_{isk}=\sum_{r=1}^n a_{ir}p_{rsk}$ and
$\textbf{A}\circledast_2 \textbf{P}=(w_{rik})_{r,i,k=1}^n$ with $w_{rik}=\sum_{s=1}^n a_{is}p_{rsk}$. The additional equality in (ii) follows from Lemma \ref{slices and transposes}(iii).
\end{proof}

\begin{corollary}\label{actions on matricization}  The matricization of $\textbf{A}\circledast_1  \textbf{P}$ by its frontal slices in
$$  \left(
      \begin{array}{c|c|c}
        \textbf{A}  \textbf{P}_{::1}    & \ldots\ldots  & \textbf{A} \textbf{P}_{::n}    \\
      \end{array}
    \right)
  $$
 and that of $\textbf{A}\circledast_2  \textbf{P} $ is
$$ \left(
      \begin{array}{c|c|c}
        \textbf{A}\big( \textbf{P}_{::1}\big)^T  &   \ldots\ldots & \textbf{A}\big( \textbf{P}_{::n}\big)^T  \\
      \end{array}
    \right)
 $$
\end{corollary}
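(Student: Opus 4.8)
The plan is to derive the two matricizations directly from Theorem~\ref{actions on slices}, which already computes each frontal slice of $\textbf{A}\circledast_1\textbf{P}$ and $\textbf{A}\circledast_2\textbf{P}$. Recall from the discussion in~\ref{CM decomposition} that the matricization of a cubic matrix by its frontal slices is, by definition, the horizontal concatenation of those slices $\big(\textbf{P}_{::1}\mid\cdots\mid\textbf{P}_{::n}\big)$. Thus the entire statement reduces to reading off the $k$-th block of each concatenation and substituting the slice formula supplied by the previous theorem.

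First I would recall that for the action $\circledast_1$, Theorem~\ref{actions on slices}(i) gives $\big(\textbf{A}\circledast_1\textbf{P}\big)_{::k}=\textbf{A}\,\textbf{P}_{::k}$ for every $k=1,\ldots,n$. Concatenating these blocks in order yields exactly
$$
\big(\,\textbf{A}\,\textbf{P}_{::1}\mid\cdots\mid\textbf{A}\,\textbf{P}_{::n}\,\big),
$$
which is the first displayed matrix. Next I would apply Theorem~\ref{actions on slices}(ii), namely $\big(\textbf{A}\circledast_2\textbf{P}\big)_{::k}=\textbf{A}\big(\textbf{P}_{::k}\big)^T$, and concatenate in the same order to obtain the second displayed matrix. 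No further computation is needed: the corollary is a transcription of the per-slice identities into the block form of the frontal-slice unfolding.

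The only point requiring a word of care is that the frontal-slice matricization is well-defined and consistent, i.e. that applying an action and then unfolding agrees with computing each unfolded block separately. This is immediate because $\circledast_1$ and $\circledast_2$ are defined entrywise (see Remark~\ref{action def remark 2}) and the unfolding merely reorders entries; as remarked in~\ref{CM decomposition}, any fixed choice of slices keeps the reordering consistent with respect to the computation. Since there is no genuine obstacle here, the entire argument is the observation that the matricization of a cubic matrix is the list of its frontal slices, so the formulas of Theorem~\ref{actions on slices} translate blockwise into the two asserted unfoldings.

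\begin{proof}
By definition (see~\ref{CM decomposition}) the matricization of a cubic matrix $\textbf{Q}$ by its frontal slices is the horizontal concatenation $\big(\textbf{Q}_{::1}\mid\cdots\mid\textbf{Q}_{::n}\big)$. Applying this to $\textbf{Q}=\textbf{A}\circledast_1\textbf{P}$ and invoking Theorem~\ref{actions on slices}(i), which gives $\big(\textbf{A}\circledast_1\textbf{P}\big)_{::k}=\textbf{A}\,\textbf{P}_{::k}$ for each $k$, produces the first displayed matrix. Likewise, taking $\textbf{Q}=\textbf{A}\circledast_2\textbf{P}$ and using Theorem~\ref{actions on slices}(ii), which gives $\big(\textbf{A}\circledast_2\textbf{P}\big)_{::k}=\textbf{A}\big(\textbf{P}_{::k}\big)^T$, produces the second displayed matrix.
\end{proof}
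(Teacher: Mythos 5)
Your proof is correct and follows exactly the paper's own argument: the paper likewise proves the corollary by combining Theorem \ref{actions on slices} with the frontal-slice matricization described in \ref{CM decomposition}. You have simply spelled out the blockwise substitution in more detail than the paper's one-line proof.
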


\begin{proof} It  suffices to use Theorem \ref{actions on slices} and recall the matricization of  cubic matrices given in  \ref{CM decomposition}.
\end{proof}

\paragraph{ }
The role marginal distributions have to better understand cubic stochastic matrices
lead us to study how actions  $\circledast_1$ and $\circledast_2$
behave on their marginal distributions and also of the  derived bivariate Markov chain (see Theorem \ref{genetic BMC}).

\begin{proposition}\label{action MD}  Let $\textbf{P} $  be a cubic stochastic matrix of type (1,2).
 \begin{enumerate}
 \item[(i)]  The $\circledast_1$  action is consistent  first accompanying matrices, whereas
$\circledast_2$ is consistent with  second accompanying matrices, i.e., for any    $\textbf{A}\in NS(n,\mathbb{R}) $ we have:
\begin{enumerate}
\item[(a)] $\big(\textbf{A} \circledast_1 \textbf{P}\big)_1 =\textbf{A}  \textbf{P}_1  $,
\item[(b)] $\big(\textbf{A} \circledast_2 \textbf{P}\big)_2 =\textbf{A}  \textbf{P}_2  $.
\end{enumerate}
 \item[(ii)]     Second accompanying matrices
 remain  invariant  under   $\circledast_1$, whereas
    first accompanying matrices remain invariant under   $\circledast_2$, i.e.:
\begin{enumerate}
\item[(a)] $\big(\textbf{A} \circledast_1 \textbf{P}\big)_2 =   \textbf{P}_2  $,
\item[(b)] $\big(\textbf{A} \circledast_2 \textbf{P}\big)_1 = \textbf{P}_1  $.
\end{enumerate}
 \end{enumerate}
 \end{proposition}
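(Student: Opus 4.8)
The plan is to establish all four identities by a direct elementwise computation, using the two formulas for $\circledast_1$ and $\circledast_2$ recorded in Remark \ref{action def remark 2} together with the definition of the accompanying matrices in \ref{def_accompanying}. The whole argument reduces to careful bookkeeping: one must track \emph{which} cubic index is summed out when forming a given accompanying matrix against \emph{which} index the chosen action modifies. The column-stochasticity of $\textbf{A}$ will be used exactly once, in part (ii).

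First I would treat (i)(a). Writing $\textbf{A}\circledast_1\textbf{P}=(q_{ist})$ with $q_{ist}=\sum_{r=1}^n a_{ir}p_{rst}$, the first accompanying matrix is formed by summing out the middle index, so that $q_{i+t}=\sum_{s=1}^n q_{ist}=\sum_{r=1}^n a_{ir}\big(\sum_{s=1}^n p_{rst}\big)=\sum_{r=1}^n a_{ir}\,p_{r+t}$. Since $\circledast_1$ touches only the first index while the first accompanying matrix sums the disjoint middle index, the weights $a_{ir}$ factor out and the inner sum collapses to $\textbf{P}_1$, yielding precisely the $(i,t)$-entry of $\textbf{A}\textbf{P}_1$. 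For (ii)(a) I would instead form the second accompanying matrix of $\textbf{A}\circledast_1\textbf{P}$ by summing out the first index, giving $q_{+st}=\sum_{i=1}^n q_{ist}=\sum_{r=1}^n\big(\sum_{i=1}^n a_{ir}\big)p_{rst}$. Now the summed index is exactly the one on which $\circledast_1$ acts, so the normalization $\sum_{i=1}^n a_{ir}=1$ applies and erases $\textbf{A}$, leaving $\sum_{r=1}^n p_{rst}=p_{+st}$, the $(s,t)$-entry of $\textbf{P}_2$; hence $\textbf{P}_2$ is invariant.

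Finally I would deduce (i)(b) and (ii)(b) without fresh computation, by conjugating the $\circledast_1$ statements with the (1,2)-transpose. An immediate check from \ref{def_accompanying} shows that transposition interchanges the two accompanying matrices, i.e. $(\textbf{M}^{T(1,2)})_1=\textbf{M}_2$ and $(\textbf{M}^{T(1,2)})_2=\textbf{M}_1$ for every $\textbf{M}\in{\rm CS_{(1,2)}}(n,\mathbb{R})$. Combining this with the relation $\big(\textbf{A}\circledast_2\textbf{P}\big)^{T(1,2)}=\textbf{A}\circledast_1\big(\textbf{P}^{T(1,2)}\big)$ of Remark \ref{action def remark 1}, one obtains $\big(\textbf{A}\circledast_2\textbf{P}\big)_2=\big(\textbf{A}\circledast_1(\textbf{P}^{T(1,2)})\big)_1$ and $\big(\textbf{A}\circledast_2\textbf{P}\big)_1=\big(\textbf{A}\circledast_1(\textbf{P}^{T(1,2)})\big)_2$, so (i)(b) and (ii)(b) fall out of (i)(a) and (ii)(a) applied to $\textbf{P}^{T(1,2)}$. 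The only delicate point in the whole proof is the index bookkeeping just described; once that is set up there is no genuine obstacle, and a purely direct computation (using $\textbf{A}\circledast_2\textbf{P}=(w_{rit})$ with $w_{rit}=\sum_s a_{is}p_{rst}$, summing out the first index for the second accompanying matrix and the middle index for the first) is equally available as a fallback for the (b)-parts.
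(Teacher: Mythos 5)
Your proof is correct and takes essentially the same route as the paper: the identical elementwise computations for (i)(a) and (ii)(a), with the interchange of sums and the column normalization $\sum_{i=1}^n a_{ir}=1$ doing exactly the work you describe. The only cosmetic difference is that the paper disposes of the (b)-parts by the symmetric direct computation ("similarly/analogously"), whereas you derive them from the (a)-parts via the transposition identity $\big(\textbf{A}\circledast_2\textbf{P}\big)^{T(1,2)}=\textbf{A}\circledast_1\big(\textbf{P}^{T(1,2)}\big)$ of Remark \ref{action def remark 2} together with the (correct) observation that $T(1,2)$ swaps the two accompanying matrices; both variants are valid and equally short.
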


\begin{proof}  Let
$\textbf{P}=(p_{ijk})_{i,j,k=1}^n$ be   cubic stochastic  of type (1,2)
and
 $\textbf{A}=(a_{ij})_{i,j=1}^n \in NS(n,\mathbb{R}) $.   Write  $ \textbf{A} \circledast_1 \textbf{P}  =(q_{ist})_{i,s,t=1}^n$.
To prove (i) we notice that
 $\big(\textbf{A} \circledast_1 \textbf{P}\big)_1 =\textbf{A}  \textbf{P}_1  $  follows from
$$ q_{i+t} =\sum_{s=1}^n q_{ist}=\sum_{s=1}^n \sum_{r=1}^n a_{ir}p_{rst}=
 \sum_{r=1}^n a_{ir}\big(\sum_{s=1}^n p_{rst}\big)=\sum_{r=1}^n a_{ir}p_{r+t} $$
 and  similarly one proves that
$\big(\textbf{A} \circledast_2 \textbf{P}\big)_2 =\textbf{A}  \textbf{P}_2  $ holds.

Next to prove   $\big(\textbf{A} \circledast_1 \textbf{P}\big)_2 =   \textbf{P}_2  $ it suffices to notice that
$$  q_{+st}=\sum_{i=1}^nq_{ist}=\sum_{i=1}^n\sum_{r=1}^n a_{ir} p_{rst}=\sum_{r=1}^n   p_{rst} \big(\sum_{i=1}^n  a_{ir}\big) =\sum_{r=1}^n   p_{rst}=   p_{+st},$$
since $\sum_{i=1}^n  a_{ir}=1$, for all $r=1,\ldots, n$. Therefore the second
 accompanying matrix of $ \textbf{A} \circledast_1 \textbf{P}  $ is exactly that of $\ \textbf{P}   $ and $\big(\textbf{A} \circledast_2 \textbf{P}\big)_1 = \textbf{P}_1  $ follows analogously.
\end{proof}

\begin{theorem}\label{action BMC}
Let  $\textbf{P}$ be a   cubic stochastic matrix of type (1,2) with associated bivariate Markov chain $\mathbf{X}_{t+1}= \mathbf{Q} \mathbf{X}_{t}$
where
$$\mathbf{Q}
=\left(%
\begin{array}{cc}
  \lambda_{11} \textbf{P}^{(11)} & \lambda_{12}\textbf{P}^{(12)}  \\
 \lambda_{21}  \textbf{P}^{(21)} & \lambda_{22}\textbf{P}^{(22)}  \\
\end{array}
\right)%
$$
being
$\textbf{P}^{(11)}=\textbf{P}^{(12)} $     the first accompanying matrix $\textbf{P}_1$ of $\textbf{P}$ and $\textbf{P}^{(21)} =\textbf{P}^{(22)} $  the   second accompanying matrix $\textbf{P}_2$ of $\textbf{P}$ and  $\lambda_{ij}\geq0$ for $1\leq i,j\leq 2$ and  $\sum_{j=1}^2\lambda_{ij}=1 $ for $i=1,2$.
For any nonnegative stochastic matrix $\textbf{A}\in NS(n,\mathbb{R})$:
\begin{enumerate}
\item[(i)] The $\circledast_1 $ action of  $\textbf{A}$ on  $\textbf{P}$ gives rise to the bivariate Markov chain  $\mathbf{X}_{t+1}= \mathbf{Q}_1 \mathbf{X}_{t}$
with
$$ \mathbf{Q}_1
=
\left(%
\begin{array}{cc}
    \textbf{A}  &   \textbf{0} \\
\textbf{0} &    \textbf{I}_n  \\
\end{array}\right)
\left(%
\begin{array}{cc}
  \lambda_{11} \textbf{P}^{(11)} & \lambda_{12}\textbf{P}^{(12)}  \\
 \lambda_{21}  \textbf{P}^{(21)} & \lambda_{22}\textbf{P}^{(22)}  \\
\end{array}
\right)%
$$
\item[(ii)] The $\circledast_2 $ action of  $\textbf{A}$ on  $\textbf{P}$ gives rise to the bivariate Markov chain  $\mathbf{X}_{t+1}= \mathbf{Q}_2\mathbf{X}_{t}$
with
$$\mathbf{Q}_2
=
\left(%
\begin{array}{cc}
   \textbf{I}_n  &   \textbf{0} \\
\textbf{0} &    \textbf{A}  \\
\end{array}\right)
\left(%
\begin{array}{cc}
  \lambda_{11} \textbf{P}^{(11)} & \lambda_{12}\textbf{P}^{(12)}  \\
 \lambda_{21}  \textbf{P}^{(21)} & \lambda_{22}\textbf{P}^{(22)}  \\
\end{array}
\right)%
$$
\item[(iii)] The simultaneous $\circledast_1 $ and $\circledast_2 $ actions of    $\textbf{A}$ on  $\textbf{P}$ give  rise to the bivariate Markov chain  $\mathbf{X}_{t+1}= \mathbf{Q}_3\mathbf{X}_{t}$
with
$$\mathbf{Q}_3
=
\left(%
\begin{array}{cc}
    \textbf{A}  &   \textbf{0} \\
\textbf{0} &    \textbf{A}  \\
\end{array}\right)
\left(%
\begin{array}{cc}
  \lambda_{11} \textbf{P}^{(11)} & \lambda_{12}\textbf{P}^{(12)}  \\
 \lambda_{21}  \textbf{P}^{(21)} & \lambda_{22}\textbf{P}^{(22)}  \\
\end{array}
\right)%
$$
\end{enumerate}
 \end{theorem}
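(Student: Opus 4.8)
The plan is to reduce the entire statement to Proposition \ref{action MD}, which already records how the two accompanying matrices of $\textbf{P}$ transform under each action. The essential observation is that the transition matrix $\mathbf{Q}$ of the bivariate Markov chain is assembled entirely from $\textbf{P}_1$ and $\textbf{P}_2$: its first block-row is built from $\textbf{P}^{(11)}=\textbf{P}^{(12)}=\textbf{P}_1$ and its second block-row from $\textbf{P}^{(21)}=\textbf{P}^{(22)}=\textbf{P}_2$. Consequently, once the effect of an action on $\textbf{P}_1$ and $\textbf{P}_2$ is known, the transition matrix of the new chain is obtained simply by replacing the relevant blocks, and each assertion amounts to recognising the outcome as a left multiplication of $\mathbf{Q}$ by the stated block-diagonal matrix.

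For (i) I would invoke Proposition \ref{action MD}(i)(a) and (ii)(a): under $\circledast_1$ the first accompanying matrix becomes $\big(\textbf{A}\circledast_1\textbf{P}\big)_1=\textbf{A}\textbf{P}_1$ while the second is unchanged, $\big(\textbf{A}\circledast_1\textbf{P}\big)_2=\textbf{P}_2$. Since $\textbf{A}\circledast_1\textbf{P}$ is again of type (1,2) (Theorem \ref{action def}), Theorem \ref{genetic BMC} applies to it, and its transition matrix is obtained from $\mathbf{Q}$ by replacing each first-row block $\lambda_{1j}\textbf{P}_1$ with $\lambda_{1j}\textbf{A}\textbf{P}_1$ and leaving the second-row blocks fixed. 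A one-line check identifies this with $\mathbf{Q}_1$, i.e.\ with left multiplication of $\mathbf{Q}$ by the block-diagonal matrix with diagonal blocks $\textbf{A}$ and $\textbf{I}_n$. Part (ii) follows symmetrically from Proposition \ref{action MD}(i)(b) and (ii)(b) (equivalently, from the interchange of the two actions under $T(1,2)$ recorded in Remark \ref{action def remark 1}): now only the second accompanying matrix is affected, becoming $\textbf{A}\textbf{P}_2$, so only the second block-row is left-multiplied by $\textbf{A}$, yielding $\mathbf{Q}_2$. For (iii), the simultaneous action is well-defined precisely because $\circledast_1$ and $\circledast_2$ commute (Proposition \ref{actions conmute}); combining the two transformation rules sends $\textbf{P}_1\mapsto\textbf{A}\textbf{P}_1$ and $\textbf{P}_2\mapsto\textbf{A}\textbf{P}_2$ at once, so both block-rows acquire a left factor $\textbf{A}$ and the transition matrix becomes $\mathbf{Q}_3$.

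There is no genuine obstacle here; the argument is essentially bookkeeping built on Proposition \ref{action MD}. The only point deserving a word of care is confirming that the output is again a legitimate bivariate Markov transition matrix in the sense of Theorem \ref{genetic BMC}, i.e.\ that $\textbf{A}\textbf{P}_1$ and $\textbf{A}\textbf{P}_2$ remain column stochastic and that the structural identities $\textbf{P}^{(11)}=\textbf{P}^{(12)}$ and $\textbf{P}^{(21)}=\textbf{P}^{(22)}$ survive the action. Both are automatic: $\textbf{A}\circledast_i\textbf{P}$ stays of type (1,2), so its accompanying matrices lie in $NS(n,\mathbb{R})$, and the two blocks within each block-row share a common accompanying matrix by construction.
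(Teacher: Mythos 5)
Your proposal is correct and follows essentially the same route as the paper: the author likewise reduces everything to Proposition \ref{action MD}, observing that $\textbf{A}\textbf{P}^{(11)}=\textbf{A}\textbf{P}^{(12)}=\textbf{A}\textbf{P}_1=\big(\textbf{A}\circledast_1\textbf{P}\big)_1$ while $\big(\textbf{A}\circledast_1\textbf{P}\big)_2=\textbf{P}_2$, and then handles (ii) and (iii) analogously. Your additional remarks on the well-definedness of the simultaneous action and on the preservation of the type (1,2) property are sound and merely make explicit what the paper leaves implicit.
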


\begin{proof}
(i) follows from  Proposition \ref{action MD} as $ \textbf{A} \textbf{P}^{(11)}= \textbf{A} \textbf{P}^{(12)}= \textbf{A} \textbf{P}_1 =\big(\textbf{A}\circledast_1 \textbf{P} \big)_1$ whereas
$ \big(\textbf{A}\circledast_1 \textbf{P} \big)_2=   \textbf{P}_2  $. Analogously one proves (ii) and (iii).
\end{proof}

\paragraph{} Results obtained in this section, besides on their own interest to study the structure of the semigroup of cubic stochastic matrices of type (1,2) through actions by $NS(n,\mathbb{R})$, will be applied in the following section in the particular setting of those populations arising in backwards Mendelian genetic inheritance.
The algebraic formulation of this problem is given in \cite{tian_coalgebras} (see also  \ref{Maksimov prob interpretation}).

\section{An example and further remarks.}

\paragraph{}  In this section, prior  to gathering some concluding remarks, we recover the problem
posed by Tian an Li \cite{tian_coalgebras} on the backwards   inheritance of the genetic  information in populations ruled by Mendel's laws.
Actions on cubic stochastic matrices are used to show how inheritance rules are modified when different mutations (i.e. changes modifying the inheritance probabilities) occur.

\subsection{Modeling mutations in genetic populations.}

\paragraph{}Let $\textbf{P}$ be a cubic stochastic matrix of type (1,2). In \cite{paniello-LAA} these cubic matrices were revisited in connection to coalgebras with genetic realization (or genetic coalgebras for short). A {\it coalgebra with genetic realization} is a real coalgebra $(C,\Delta)$, i.e. a finite dimensional real vector space $C$ with a basis ${\cal B}=\{e_1,\ldots, e_n\}$ and a linear map $\Delta:C\to C\otimes C$, called  {\it comultiplication}, such that $\Delta(e_k)=\sum_{i,j=1}^n \beta_{ij}^k e_i\otimes e_j$ with
\begin{enumerate}
\item[(i)] $0\leq \beta_{ij}^k\leq 1$, for all $i,j,k=1,\ldots,n$,
\item[(ii)] $\sum_{i,j=1}^n \beta_{ij}^k =1$ for all $k=1,\ldots,n$.
\end{enumerate}
Genetic coalgebras were introduced in \cite{tian_coalgebras} to model the backwards (from progeny to ancestors) genetic inheritance in Mendelian genetic systems.

\paragraph{}Looking at the  elements of (the natural basis) ${\cal B}$ as a complete set of representatives of the different types $\{1,2,\ldots,n\}$ for a given hereditary trait, we can identify $ \beta_{ij}^k=P(father=i, mother=j\mid child=k)   $.
Then $\textbf{P}=(p_{ijk})_{i,j,k=1}^n$  with $p_{ijk}=\beta_{ij}^k$, for all $i,j,k=1,\ldots,n$, is cubic stochastic of type (1,2) \cite{paniello-LAA}.

\paragraph{}Examples of genetic coalgebras, including a description of their related cubic stochastic matrices can be found in \cite{paniello-EO}. The reader is also referred to \cite{paniello-graphs} for examples of   graphical representations of  these systems by oriented graphs.

\paragraph{}One of the questions that remained unsolved in \cite{tian_coalgebras} was how to  introduce mutations
in Mendelian populations.
Mutations   have to be understood as changes affecting to the transition probabilities, (i.e. to the $\beta_{ij}^k$'s), so that the set $\{\beta_{ij}^k =p_{ijk}\}_{i,j,k=1}^n $ is replaced by a new set of transition probabilities $\{ \widetilde{p}_{ijk}\}_{i,j,k=1}^n $ still defining a cubic stochastic matrix.
 A first approach to this problem was given in \cite[Theorem 5.7]{tian_coalgebras}
by composing the coalgebra multiplication $\Delta$ with different linear maps having the property of leaving the simplex  $S^{n-1}$ (i.e. the set of probability distributions on the set $\{1,2,\ldots,n\}$) invariant.

\paragraph{} We notice that, due to the correspondence between genetic coalgebras and the elements of ${\rm CS_{(1,2)}(n,\mathbb{ R})}$
 \cite{paniello-LAA}, the backwards evolution of the corresponding genetic population can be studied using  cubic matrices. The $\star$ multiplication defined in \ref{star def} assumes paternal and maternal heritage to be symmetrical (i.e. equally weighted). Then changes in the inheritance probabilities (i.e. mutations) are introduced by having nonnegative column stochastic square ($ n\times n$) matrices, i.e. elements in $NS(n,\mathbb{R})$, acting on either the paternal ($\circledast_1  $) or maternal ($\circledast_2 $) heritage.

 \paragraph{}
 Take a fixed hereditary trait $k$ and consider the corresponding
  frontal $k$-slice  $\textbf{P}_{::k}$
of $\textbf{P}$. Then  $\textbf{P}_{::k}$ encloses the  probability distribution that traces the genetic inheritance from a trait $k$ back to its progenitors.  Now it  suffices to recall that having   $\textbf{A }\in NS(n,\mathbb{R})$  acting on  $\textbf{P}$, amounts to have
 $\textbf{A } $  acting on all $\textbf{P}_{::k}$, $k=1,\ldots,n$, simultaneously (see Proposition \ref{actions on slices}).

  \paragraph{} On the other hand, taking into account that the first accompanying matrix   $\textbf{P}_1=(p_{i+k})_{i,k=1}^n$
encloses the  probabilities $p_{i+k}=P(father=i\mid child=k)$, and that actions of $\textbf{A }\in NS(n,\mathbb{R})$ on $\textbf{P}$  result on actions (by multiplication) of  $\textbf{A}$ on the marginal distributions on $\textbf{P}$  (see Proposition \ref{action MD} and Theorem \ref{action BMC}), it becomes possible to consider mutations affecting   only
one of the two (paternal or maternal) inheritance lines. Similar results can be achieved when considering the second accompanying matrix $\textbf{P}_2$ of $\textbf{P}$ since then
$p_{+jk}=P(mother=j\mid child=k)$.

 \paragraph{} As a result the equally weighted $\star$ multiplication,  together to the actions $\circledast_1$ and $\circledast_2$, defined on ${\rm CS_{(1,2)}(n,\mathbb{ R})}$ provide a framework to consider different mutations in coalgebraic structures appearing in genetics.

\subsection{Further remarks.}

  \paragraph{}  In this paper we deal  with nonnegative stochastic ($n\times n$) square and ($n\times n\times n$) cubic matrices. We   first put aside their stochasticity to consider different algebraic properties
  of these sets of matrices. It was then possible, with suitable defined multiplications, to obtain different semigroup structures. We have also defined   different actions of the semigroup $NS(n,\mathbb{R})$ of nonnegative  column stochastic $n\times n$ matrices on the semigroup $( {\rm CS_{(1,2)}(n, \mathbb{R})}, \star)$ of cubic stochastic matrices of type (1,2). For these actions, aimed by the role matricization (i.e. tensor decomposition into matrices \cite{kolda}) and marginal distributions have to study these cubic matrices \cite{paniello-EO,paniello-graphs}, we have also considered how actions translate to their slices and related bivariate Markov chains.

  \paragraph{} Finally we have brought back the fact that our interest in cubic matrices stemmed  from the search for an algebraic framework where mutations  affecting to the transference of the information in backwards genetic inheritance (i.e. from progeny to ancestors) in Mendelian populations could be algebraically described. Keeping this in mind, we began this section briefly considering how mutations changing the inheritance probabilities    can be identified to actions on cubic stochastic matrices of type (1,2) by elements of
$NS(n,\mathbb{R})$.

 \paragraph{}  We remark that, although we have just focused on those cubic matrices linked to Mendelian inheritance,  similar approaches would also make sense for different types of stochastic tensors. It would  suffice to consider an acting (semi)group consistent with,
 both
 dimension and  stochastic properties of, the involved tensors.
  Consider,  for instance,  a 3-stochastic cubic matrix $\textbf{P}=(p_{ijk})_{i,j,k=1}^n$. Then for   any  permutation  $\sigma\in S_n$, i.e. any element of the symmetric group of $n$ elements,   $\sigma \textbf{P}= (p_{ij\sigma(k)})_{i,j,k=1}^n$
is again  3-stochastic.  It would therefore become possible to define an action of
 $S_n$ on the set of 3-stochastic cubic matrices by permuting the matrix frontal slices according to $\sigma$, or equivalently, actions on quadratic stochastic operators (see \ref{qso}).

   \paragraph{}
   This type  of actions  is expected
  to have reasonable applications to study changes in transition probabilities,   not only in biological populations but also in many  other types of systems or processes, as they provide an algebraic framework where displaying how these changes modify the future evolution of the system under consideration.

\section*{Acknowledgements.}
The author  wants to thank the referee for his/her carefully reading of the original manuscript and  also for all provided suggestions that have undoubtedly contributed to improve the paper. The author also wants to thank Professor M. Ladra for his comments on a preliminary version of the manuscript.

\end{document}